\renewcommand{\models}{\vDash}
\newcommand{\ran}{\operatorname{ran}}
\newcommand{\dom}{\operatorname{dom}}
\newcommand{\thzfc}{\mathrm{ZFC}}
\newcommand{\rulec}{\mathrm{CR}}
\newcommand{\acal}{\mathcal{A}}
\newcommand{\restrictto}[1]{{\upharpoonright}_{#1}}
\newcommand{\val}{\mathrm{val}}
\newcommand{\cou}{\mathrm{count}}
\definecolor{codebg}{HTML}{ffffff}
\definecolor{dkgreen}{rgb}{0,0.6,0}
\definecolor{gray}{rgb}{0.5,0.5,0.5}
\definecolor{mauve}{rgb}{0.58,0,0.82}
\tiny\color{gray},
\newcommand{\highlight}[1]{\textbf{\textit{#1}}}
\newcommand{\varphivalue}{\mathrm{val}_{A}(\varphi)[\sigma]}
\newcommand{\psivalue}{\mathrm{val}_{A}(\psi)[\sigma]}
\newcommand{\chivalue}{\mathrm{val}_{A}(\chi)[\sigma]}
\newcommand{\varphivaluep}{\mathrm{val}_{A}(\varphi)[\tau]}
\definecolor{ao(english)}{rgb}{0.0, 0.5, 0.0}
\begin{document}

\title{Which came first, set theory or logic?}

\author{J. Julian Pulgarín}
\address{Senior Software Engineer at Block, Inc., Salt Lake City, Utah, USA}
\email{\href{mailto:julian@pulgarin.co}{julian@pulgarin.co}}
\urladdr{\url{https://www.researchgate.net/profile/Julian-Pulgarin}}

\author{Andr\'es F. Uribe-Zapata}
\address{TU Wien, Faculty of Mathematics and Geoinformation, Institute of Discrete Mathematics and Geometry, Wiedner Hauptstrasse 8--10, A--1040 Vienna, Austria }
\email{andres.zapata@tuwien.ac.at} 
\urladdr{\url{https://sites.google.com/view/andres-uribe-afuz/home}}

\thanks{ffff}

\subjclass[2020]{03B10, 00A30, 03E30, 	03F03, 	03F99, 	03F55}

\keywords{First-order logic, Set theory, Metatheory, Finitism, Algorithm, Finite sets, Intuitionism}

\date{\today}

\makeatletter
\def\@roman#1{\romannumeral #1}
\makeatother

\newcounter{enuAlph}
\renewcommand{\theenuAlph}{\Alph{enuAlph}}

\numberwithin{equation}{section}
\renewcommand{\theequation}{\thesection.\arabic{equation}}

\theoremstyle{plain}
  \newtheorem{theorem}[equation]{Theorem}
  \newtheorem{corollary}[equation]{Corollary}
  \newtheorem{lemma}[equation]{Lemma}
  \newtheorem{mainlemma}[equation]{Main Lemma}
  \newtheorem{fact}[equation]{Fact}
  \newtheorem{prop}[equation]{Proposition}
  \newtheorem{claim}[equation]{Claim}
  \newtheorem{question}[equation]{Question}
  \newtheorem{problem}[equation]{Problem}
  \newtheorem{conjecture}[equation]{Conjecture}
  \newtheorem*{theorem*}{Theorem}
  \newtheorem*{mainthm*}{Main Theorem}
  \newtheorem{teorema}[enuAlph]{Theorem}
  \newtheorem*{corollary*}{Corollary}
\theoremstyle{definition}
  \newtheorem{definition}[equation]{Definition}
  \newtheorem{rdefinition}[equation]{Recursive Definition}
  \newtheorem{ruler}[equation]{Construction Rule}
  \newtheorem{example}[equation]{Example}
  \newtheorem{remark}[equation]{Remark}
  \newtheorem{notation}[equation]{Notation}
  \newtheorem{context}[equation]{Context}
  \newtheorem{observation}[equation]{Observation}
  \newtheorem*{definition*}{Definition}
  \newtheorem*{acknowledgements*}{Acknowledgements}
  \newtheorem*{funding*}{Funding}

\def\sectionautorefname{Section}
\def\subsectionautorefname{Subsection}

\begin{abstract}
    The construction of first-order logic and set theory gives rise to apparent circularities of mutual dependence, making it unclear which can act as a self-contained starting point in the foundation of mathematics. In this paper, we carry out a metatheoretic and finitistic development of a first-order logical system in which we define formal set theory (ZFC). We contend that the techniques employed in constructing this system offer a philosophically sound basis for introducing ZFC and proceeding with the conventional formal development of mathematics. Lastly, by examining the chronology of the aforementioned construction, we attempt to answer the titular question.
\end{abstract}

\maketitle

\section{Introduction}\label{sec:intro}

“Zermelo–Fraenkel set theory is the foundation of mathematics”, although not without controversy, is a common assertion in virtually all mathematical contexts. From a formal standpoint, $\thzfc$ is a first-order theory; thus, a system of first-order logic must be established prior to its development. At the same time, during the development of this system, we require the use of certain tools from set theory. For example, in the proof of Gödel's completeness theorem, we make use of equivalence relations and the axiom of choice in the form of Zorn's lemma. An even more fundamental example is the proof of the deduction theorem, which requires the application of induction over the length of a formula. There is therefore an apparent circularity within the construction of logic and set theory—a worrying prospect for the rest of mathematics that rests upon them. In searching for a solution to this paradox, a natural question arises: \highlight{which came first, set theory or logic?}

Although this question arises early in a mathematician's journey, the literature addressing it is relatively sparse and lacks comprehensive treatment (see, for example, \cite{Kleene})—possibly because of the philosophical questions that quickly arise. In this paper, we will tackle the paradox by following an outline given by Kunen, in his textbook, \textit{The Foundations of Mathematics}:

\begin{quotation}
    We start off by working in the metatheory [...]. As usual, the metatheory is completely finitistic, and hence presumably beyond reproach. In the metatheory, we develop formal logic, including the notion of formal proof. We must also prove some (finitistic) theorems about our notion of formal proof to make sense of it. This includes the Soundness of our proof method [...], rephrased to say that if $\Sigma \models \varphi$ then $\varphi$ is true in all \textit{finite} models of $\Sigma$ \citeyearpar[III.2]{Kunen}.
\end{quotation}

The metatheory here mentioned by Kunen refers to the unformalized mode of reasoning that humans find “intuitively convincing” \cite[\S15]{Kleene} by virtue of our rational faculties. Relying on our metatheory, instead of ZFC, to construct our first-order logical system, will be the key in removing all suspicions of circularity. Kunen's suggestion to include a proof of the soundness theorem is not accidental: it assures us that we can trust the demonstrations of our system.

Once constructed, we will develop ZFC within our logical system. Although not done in this paper, with the machinery of ZFC and logic in place, one can then construct logic a \textit{second} time, but now with the ability to use the completed infinite, and tools like Zorn's lemma and the deduction theorem, paradox-free.

Kunen's proposal lacks detail, and it is not clear, for example, what is the precise nature of the metatheory, or of the universe of objects it is used to talk about. Similarly, it is not explicit about what results we need in order to be able to prove the soundness theorem, without walking into a circularity or using non-metatheoretical reasoning. The goal of this paper is to make all these things explicit.

We will structure our paper in the following manner: in the second section we establish what we will understand as the metatheory, the types of reasoning it permits, and we justify the definitions and recursive arguments that are used throughout. In the third section, we construct a finitistic set theory founded on the base intuitions we have about how we think about and manipulate collections of objects in the physical world. In the fourth section, we use these tools to construct a logical system capable of hosting ZFC, with the soundness theorem marking our finish line. These series of constructions clearly show how disastrous circularity is averted, as well as giving us a concrete model that allows us to philosophize on the chronology of set theory and logic in the fifth and last section.

\section{Metatheoretical Logic: An Intuitionistic Approach}\label{sec-2}

The first task before us is to decide what types of reasoning we will admit during our construction of first-order logic. What our metatheory looks like is constrained by various factors. First, it must be powerful enough to prove all of the results that we need to successfully build our system of formal logic. Secondly, it must consist of modes of reasoning that would be accepted by even the most ardent skeptic, for our whole purpose would be thwarted if someone could credibly claim that we are reasoning in ways that must themselves be justified by appealing to an even more basic metatheory.

In this paper, we will use intuitionistic logic as our metatheory, as we feel it is particularly well-suited given the two constraints just mentioned. Intuitionistic logic refers to the logical principles underlying intuitionistic mathematics, which is the subset of mathematics that aims to give primacy to human intuition and reduce our reliance on ontological assumptions. It should come as no surprise then that this logic can function as the ideal metatheory when trying to prove things from first principles. We will not give a full account of intuitionistic logic here but will highlight many of its properties that we will use in this paper. To read more about intuitionistic logic, see \cite[Ch.~5]{phil}.

The foundation of our metatheory is bootstrapped from our intuitive notions of self-evident logical facts and finite sets. For example, we assume that our rational faculties allow us to distinguish between different symbols and compare quantities. This means that we are able to assert, without reference to any outside axiomatic system or justification, statements like ``$a$ equals $b$ implies $b$ equals $a$" and ``$x$ equals $x$". Similarly, using our intuitions about how quantities work in the real world, we are also able to assert statements about natural numbers and their arithmetical relations.

An observant reader might argue that in the preceding paragraph, we have surreptitiously introduced set theory into our metatheory, by relying on facts about finite \textit{sets}. To address this objection, we put forth two points. Firstly, our reliance is only on set-theoretic facts pertaining to the comparison of sizes of finite sets. Secondly, we do not need to accept any of these facts on faith; each can be empirically verified. For instance, suppose we assert that set $\mathcal{A}$ contains more elements than set $\mathcal{B}$. This claim can be verified using a simple pen-and-paper method: make a mark at the top of the page for each element in $\mathcal{A}$, and at the bottom of the page for each element in $\mathcal{B}$. Next, begin crossing off marks, one-by-one from both the top and the bottom. If all the marks at the bottom are exhausted while some remain at the top, it verifies that set $\mathcal{A}$ indeed has more elements. The only thing you need to take on faith is that humans, in general, are able to follow the above instructions by virtue of our sense perception and basic rationality. The method outlined above can be conceptualized as a basic algorithm: an explicit procedure that can be mechanically followed to arrive at a result. In the next subsection, we show how we will apply similarly structured algorithms to prove more complex statements in our metatheory.

\subsection{Proofs as Algorithms}
  
Proofs in intuitionistic logic are very similar to algorithms in computer programming. A proof of the statement $\varphi$ will be an algorithm that terminates in finite time, and which will serve to convince us of the truth-value of what is asserted by $\varphi$. Intuitionistic logic makes rigorous what an algorithm must look like to prove a specific statement, but roughly the algorithm should exhaustively verify all that is asserted by the statement. Since the algorithm must terminate after a finite amount of time, in principle a human could execute the algorithm on a computer (or pen and paper, if she were inclined), and thus be convinced of the truth of the proposition that it is a proof for. For example, we could envision a program that proves the statement, \lq$7$ is prime\rq, by checking that no integer strictly between $1$ and $7$ is a divisor of $7$. Algorithms are inherently precise and capable of being executed across various platforms without requiring originality or creativity, and their structure and content lend themselves well to rational analysis. These attributes, among others, render them particularly apt as the basis of intuitionistic proof theory.

\subsection{Recursive Definitions and Proofs}

Recursive programming is a well-known technique in computer programming that allows one to solve problems by breaking them down into smaller instances of the same problem. We will employ this technique when  proving theorems about objects that have a recursive structure. Logical formulas are an example of just such an object: they are composed of simpler subformulas. If we wanted to prove that a formula $\varphi$ has a certain property, we could use a recursive algorithm that would prove the property for the subformulas of $\varphi$, and for the subformulas of the subformulas, etc., until reaching the basic atoms of the formula which are not themselves decomposable. Since formulas are of finite length, this algorithm will eventually terminate and therefore serves as an exhaustive proof that $\varphi$ has the property in question. An algorithm of this type can be thought of as a proof using mathematical induction, and in fact, all of our recursive proofs will look like regular inductive proofs, instead of explicit computer programs.

We will use recursive algorithms not only to prove theorems about recursive objects but also to define them. These definitions can be thought of as algorithms that can detect whether a given object satisfies the recursive definition in question. As an example, consider the following recursive definition:

\begin{rdefinition}
    A finite sequence of symbols $s$ is a \highlight{p-sequence} if, and only if, one of the following holds:
    
    \begin{enumerate}
        \item $s$ is an empty sequence.
        \item $s$ is of the form ($s'$), where $s'$ is a p-sequence.
    \end{enumerate}
\end{rdefinition}

From this definition, it is clear that $s$ is a p-sequence if and only if it is composed of $n$ open parentheses followed by $n$ closed parentheses. Using Python, the algorithm induced by the recursive definition would look as follows:

\begin{minted}
[
frame=lines,
framesep=2mm,
baselinestretch=1.2,
bgcolor=codebg!20,
fontsize=\footnotesize,
linenos
]
{python}
def is_p_sequence(s):
    if not s:  # s is an empty sequence
        return True

    return (
        s[0] == '(' and  # s begins with (
        s[-1] == ')' and  # s ends with )
        is_p_sequence(s[1:-1])  # s' is a p-sequence
    )
\end{minted}

\subsection{Universal Quantification}

At first glance, it may seem like we are severely limited in our ability to reason when using our metatheory. Using the tools discussed thus far there does not seem to be a way to prove things about (potentially) infinite collections of things, such as being able to prove that all integers have a unique prime factorization. Fortunately, there is a way to interpret statements of this form in our metatheory, without dramatically changing our definition of proof. For us, proving a statement of the form, ``for all x, $\varphi (x)$", will be done by exhibiting a \textit{recipe}—an algorithm, to be precise—that transforms a particular element $x$, into an algorithm proving $\varphi (x)$. In other words, our proof will be a \textit{guarantee} that, for any item $x$ in the collection, we can produce a valid intuitionistic proof of $\varphi (x)$. The difference here is mostly philosophical: it allows us to prove things for elements in sets of unbounded size, without making ontological commitments to the completed infinite. In practice, intuitionistic universal quantification proofs look a lot like their counterparts in classical mathematics: we assume we have an arbitrary element of a given collection and proceed to prove something about this arbitrary element.

Notice that this means that the traditional inductive proof used to prove the aforementioned unique factorization of the integers will be acceptable to us since it can be used to create the desired recipe. That is, given an integer $n$, the traditional inductive proof can be used to create a recursive algorithm that will function as a proof of $n$'s unique prime factorization. We will make heavy use of universal quantification to prove that all sets, all assignments, or all formulas, have certain properties.

Closely  related  to  the  problem  of  choosing  our  manner  of  reasoning,  is  choosing  the types of objects over which we will reason about. It is clear that if our aspirations of proofs as being finitely verifiable are to be fulfilled, then the objects of study must also be finite. The nature of these objects and the ways in which we can manipulate them will be the topic of the next section.

\section{Set Metatheory: Rules for Manipulating Rational Objects}  

In this section, we will define and study finite sets. We will begin with the primary intuitions we have of collections of tangible, real-world objects. Then, we will abstract out these intuitions to rational objects—that is, to conceptual entities that exist within our minds—which will allow us to define a finite set theory that corresponds to the usual formalization. Note that all of our reasoning will be intuitionistically valid.

\subsection{Collections and Membership}

A \highlight{collection} is a finite, unordered \emph{list} of objects without repetitions. We use calligraphic letters to refer to collections: $ \mathcal{A}, \mathcal{B}, \mathcal{C}, $ etc. \highlight{Membership} is a relation that can occur between an object $ a $ and a collection $ \mathcal{A} $: if $ a $ is one of the objects listed in the collection $ \mathcal{A} $, then we say that $ a $ \highlight{belongs} to $ \mathcal{A} $ or that $ a $ is an \highlight{element} of $ \mathcal{A} $, and denote it as $ a \in \mathcal{A}. $ We also consider that collections are completely determined by their listed objects, that is:

\begin{definition}
     Two collections are \highlight{equal} if, and only if,  they have the same elements.
\end{definition}

Given two collections, $\mathcal{A}$ and $\mathcal{B}$, we can create a new collection whose objects are the objects of $\mathcal{A}$ and the objects of $\mathcal{B}$, by listing the objects of both and eliminating any repetitions. This process is known as the \highlight{union} of A and B. Similarly, collections can be intersected or subtracted.

We can now ascend to a higher level of abstraction, and consider collections themselves as \highlight{abstract objects}. This will allow us to form \highlight{abstract collections} (whose elements are abstract objects) to which we can apply the same manipulation rules we have just described. Notice that if $\mathcal{A}$ and $\mathcal{B}$ are collections, we can create a new collection consisting precisely of the abstract objects $\mathcal{A}$ and $\mathcal{B}$. Generalizing  this property we obtain what we call the \highlight{construction rule}, which we abbreviate as $\rulec$:

\begin{ruler}
          If $\acal_{0}, \dots, \acal_{n}$ are collections, then we can construct a collection whose members are exactly  $\acal_{0}, \dots, \acal_{n}. $
\end{ruler}          

The justification for the following notation is given by virtue of the equality between collections:

\begin{notation}
      Given collections $\acal_{0}, \dots, \acal_{n}$, we denote the collection whose existence is given by $\rulec$ as  $\{ \acal_{0}, \dots, \acal_{n} \}$.
\end{notation}

Note that a collection can have no elements. We denote this collection by $\emptyset$ and we call it the \highlight{empty collection}.

\subsection{Finite Sets}

Based on the intuitions mentioned in the previous subsection, and starting from the empty collection, we are going to define a category of collections that corresponds to what we typically consider as the finite sets.

\begin{rdefinition}\label{def-set}
        A collection $\mathcal{A}$ is a \highlight{finite set}, or simply, \highlight{set}, if, and only if, one of the following holds:
     \begin{enumerate}
         \item $\mathcal{A}$ = $\emptyset$.
         \item $\mathcal{A} = \{ A_{0}, \dots, A_{n} \}$, where all of $A_{0}, \dots, A_{n}$ are sets. 
     \end{enumerate}
\end{rdefinition}

For example, $ \emptyset, \ \{ \emptyset\}, \ \{\{\emptyset \} \},$ and $ \{\emptyset, \ \{\emptyset \} \} $ are sets. Notice that from the previous definition, it follows that every set can be constructed by starting from the empty set and applying $ \rulec$ finite times. We use  capital letters of the Latin alphabet to refer to sets: $A, B, C,$, etc.

Furthermore, the following result is immediate:

\begin{lemma}    
    Every element of a set is a set.
\end{lemma}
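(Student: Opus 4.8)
The plan is to treat this as a universally quantified statement over all sets and, following the interpretation of universal quantification set out in Section~\ref{sec-2}, to exhibit a recipe that converts an arbitrary set $A$ together with an arbitrary element $a \in A$ into a proof that $a$ is a set. So I would begin by assuming given a set $A$ and an element $a \in A$, and my task would be to manufacture, from these data, a verification that $a$ satisfies Recursive Definition~\ref{def-set}.

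Next I would unfold that recursive definition, which offers exactly two cases for $A$. If $A = \emptyset$, then $A$ has no listed objects at all, so the hypothesis $a \in A$ can never actually be met, and the conclusion holds vacuously; there is nothing to produce. If instead $A = \{A_{0}, \dots, A_{n}\}$ with each $A_{i}$ a set, then by the meaning of the membership relation and the notation fixed for $\rulec$-constructed collections, the assertion $a \in A$ says precisely that $a = A_{i}$ for some $i \le n$. Since $A_{i}$ is a set by the hypothesis of the second clause, and since equal collections are interchangeable, $a$ is itself a set, which is exactly the output the recipe must return.

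The notable feature here, which I expect to remove any real obstacle, is that this recipe requires no recursion on the structure of $A$: a single unfolding of the definition suffices, because the second clause already guarantees the set-hood of the immediate members it lists. The only points that genuinely demand care are matters of bookkeeping: confirming that the empty case is truly vacuous rather than requiring a witness, and making explicit that membership in a collection displayed as $\{A_{0}, \dots, A_{n}\}$ coincides with being one of the $A_{i}$. Both of these follow directly from the conventions on membership, equality, and $\rulec$ fixed in the previous subsection, so the argument should reduce to this one short case split.
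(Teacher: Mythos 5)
Your proof is correct and matches the paper's reasoning: the paper states this lemma as ``immediate'' from Recursive Definition~\ref{def-set} with no written proof, and your single unfolding of the definition---the vacuous empty case plus the observation that membership in $\{A_{0},\dots,A_{n}\}$ means equality with some $A_{i}$, which is a set by hypothesis---is exactly that immediate argument. Your framing of the statement as a recipe in the sense of the metatheory's universal quantification is also faithful to the paper's conventions.
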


\begin{definition}
     If $\acal$ is a collection and $B$ is a set, we say that $\acal$  is a \highlight{subcollection} of $B$, denoted by $\acal \subseteq B$, if, and only if, every element of $\acal$ is an element of $B.$
\end{definition}

Notice that if $\acal \subseteq B,$ then every object of $\acal$ is a set, and by \autoref{def-set} \, (2), $\acal$ is a set. We thus obtain the following result:

\begin{lemma}    
    Every subcollection of a set is a set.
\end{lemma}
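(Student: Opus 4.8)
The plan is to verify directly that $\acal$ satisfies one of the two clauses of the recursive definition of a finite set (\autoref{def-set}), using the hypothesis $\acal \subseteq B$ together with the previously established fact that every element of a set is a set. Since $\acal$ is assumed to be a collection (this is built into the definition of subcollection), the only thing at issue is whether the recursive set-recognition test succeeds on it; the hypothesis on $B$ will supply exactly the information that each element of $\acal$ passes that test.

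First I would unwind the definition of subcollection: from $\acal \subseteq B$ we know that every element of $\acal$ is an element of $B$. Next I would apply the earlier lemma to the set $B$, concluding that every element of $B$ is itself a set. Chaining these two observations gives that every element of $\acal$ is a set. With this in hand, I would split into the two cases dictated by \autoref{def-set}. If $\acal = \emptyset$, then $\acal$ is a set by clause (1). Otherwise $\acal$ is a nonempty collection, so by the construction rule it can be written as $\{A_0, \dots, A_n\}$ for some objects $A_0, \dots, A_n$; by the previous step each $A_i$ is a set, and therefore $\acal$ is a set by clause (2). In either case $\acal$ is a set, as desired.

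In the intuitionistic/algorithmic reading adopted in \autoref{sec-2}, this argument is genuinely a recipe: given any subcollection $\acal$ of a set $B$, it explicitly produces the verification that $\acal$ meets the recursive definition, and the recursion on the finitely many elements of $\acal$ terminates because each such element inherits a termination certificate from the run of the set-recognition algorithm on $B$. The main obstacle is therefore not the logical content—the derivation is essentially immediate, as the paper itself anticipates—but rather the bookkeeping of invoking the correct clause in each case and confirming that the bounded universal quantification ``every element of $\acal$ is a set'' is legitimate; it is, since $\acal$ has only finitely many elements, so no appeal to the completed infinite is made.
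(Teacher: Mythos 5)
Your proof is correct and follows essentially the same route as the paper: chain the definition of subcollection with the earlier lemma (every element of a set is a set) to see that every element of $\acal$ is a set, then invoke \autoref{def-set} to conclude $\acal$ is a set. Your explicit handling of the empty case and the remark on finiteness are minor refinements of bookkeeping the paper leaves implicit, not a different argument.
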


Using the definition of set it immediately  follows that the result of joining or subtracting sets is itself a set:

\begin{lemma}
    Consider sets $A$ and $B$. Then: 
    \begin{enumerate}
        \item We can construct a set whose elements are exactly the elements of $A$ and the elements of $B,$ which is denoted as $A \cup B. $
        
        \item We can construct a set whose elements are exactly the elements of $A$ that are not in $B,$ which is denoted by $ A \setminus B.$
    \end{enumerate}
\end{lemma}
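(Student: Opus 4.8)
The plan is to verify set-hood of the two collections $A \cup B$ and $A \setminus B$ separately, invoking the two lemmas already at our disposal—``every element of a set is a set'' and ``every subcollection of a set is a set''—rather than reasoning from scratch. Recall that the discussion preceding \autoref{def-set} already guarantees that, at the level of collections, we may form the union and the difference of $A$ and $B$ by listing the relevant objects and deleting repetitions; so in both cases the object in question is already known to be a collection, and all that remains is to show it falls under \autoref{def-set}. Since the statement is universally quantified over all sets $A$ and $B$, I would present the argument as a recipe that, given any such $A$ and $B$, outputs a proof of set-hood.

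For the difference, part (2), the argument is immediate: every element of $A \setminus B$ is by construction an element of $A$, so $A \setminus B$ is a subcollection of $A$. Applying the lemma that every subcollection of a set is a set, we conclude that $A \setminus B$ is a set.

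For the union, part (1), the only wrinkle is that $A \cup B$ need not be a subcollection of either $A$ or $B$, so the subcollection lemma is not directly available and we must return to \autoref{def-set} itself. I would argue by cases according to whether $A \cup B$ is empty. If it is, then $A \cup B = \emptyset$ and clause (1) of \autoref{def-set} applies. Otherwise, write $A \cup B = \{C_{0}, \dots, C_{m}\}$; each $C_{i}$ is, by the construction of the union, an element of $A$ or an element of $B$, and in either case the lemma ``every element of a set is a set'' tells us that $C_{i}$ is a set. Hence all the listed members are sets, so clause (2) of \autoref{def-set} applies and $A \cup B$ is a set.

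There is no genuine obstacle here; both parts are direct corollaries of the lemmas proved just above, which is presumably why those lemmas were isolated in the first place. The only point requiring a little care is the finitistic reading: the case split for the union (empty versus nonempty) must be one we can actually decide for the concrete sets handed to us—which it is, since each set is given as an explicit finite list and, once the union has been formed and repetitions deleted, we can simply inspect whether the resulting list is empty. Thus the whole argument stays within the constructive, algorithmically verifiable framework of the metatheory.
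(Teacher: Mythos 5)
Your proposal is correct and matches the paper's approach: the paper gives no explicit proof, stating only that the result ``immediately follows'' from \autoref{def-set}, and your argument (each element of $A \cup B$ is an element of $A$ or of $B$, hence a set, so clause (2) of the definition applies; $A \setminus B$ is a subcollection of $A$, hence a set) is precisely that immediate reasoning spelled out. The extra care you take with the empty case and the decidability of the case split is a harmless refinement, not a departure.
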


\subsection{Functions and natural numbers represented as sets and sequences}

In this subsection, using the definition and properties of sets studied in the previous subsection, we introduce three notions that are fundamental in our later development: functions, natural numbers as sets, and finite sequences. We begin with the definition of ordered pair:

\begin{definition}
     If $A$ is a set and $a, b \in A,$ we define the \highlight{ordered pair} of $a$ and $b$, denoted by  $(a, b)$, as: $(a, b) \coloneqq \{ \{a\}, \{a,b \} \}.$
\end{definition}

Since in particular $a$ and $b$ are sets, it is clear by \autoref{def-set}, that $(a, b)$ is a set. On the other hand, it is also clear that if $A$ and $B$ are sets, then there are a finite number of pairs of the form $(a, b)$ where $a \in A$ and $b \in B.$ Applying \autoref{def-set} once more, it follows that the collection of all of them is a set:

\begin{notation}
    If $A$ and $B$ are sets, we denote the set which contains all elements of the form $(a, b)$, with $a \in A$ and $b \in B$, by $A \times B$. 
\end{notation}

Using ordered pairs, we can define functions in the usual way:

\begin{definition}
     Consider sets $A, B$, and $f.$ We say $f$ is a function from $A$ into $B$, denoted by $f: A \to B$, if, and only if, it satisfies the following conditions:
     \begin{enumerate}
         \item $f \subseteq A \times B.$
         
         \item If $a \in A, \ b_{1}, b_{2} \in B$ and $(a, b_{1}), (a, b_{2}) \in f,$ then $b_{1} = b_{2}.$
         
         \item For all $a \in A,$ there is an element $b \in B$, such that $(a, b) \in f.$ 
     \end{enumerate}
          
     In this case, we say $A$ is the \highlight{domain} of $f$ and we denote it by $\dom(f).$ We denote by $f(a)$ the unique element $b$ such that $(a, b) \in f$.
\end{definition}

Now, we define the range of a function:

\begin{definition}
     Consider a function $f \colon A \to B.$ An \highlight{image} of $f$ is an element $b \in B$ such that there is an $a \in A$, with $f(a) = b.$ Also, we define the \highlight{range} of $f$, denoted by $\ran(f)$, as the set of all the images of $f.$  
\end{definition}

Notice that the range of a function is a set since it is a subset of $ B. $

\begin{definition}\label{res-func}
     Assume  $f$ is a function  with $\dom(f) = A$, and let $C \subseteq A.$ We define the \highlight{restriction} of $f$ to $C,$ denoted by $f \restrictto{C}$, as: $f \, \restrictto{C} \, \coloneqq \{ (a, b) \in f \colon  a \in C\}.$ 
\end{definition}

As mentioned previously in \autoref{sec-2}, our metatheory allows us to consider natural numbers and their properties intuitively. However, in order to define the notion of a \emph{sequence}, we will now proceed to represent the natural numbers using finite sets.

\begin{rdefinition}\label{natural-nums}    
    The natural numbers are represented, recursively, as follows:

    A set $n$ is a natural number if, and only if, one of the following holds: 

    \begin{enumerate}
        \item $n = \emptyset$, which represents the number $0$;
        
        \item $n = m \cup \{ m \}$, where $m$ is a natural number, in which case $n$ represents the number $m + 1$.
    \end{enumerate}
\end{rdefinition}

\begin{lemma}
    A natural number is the set of all the numbers less than itself.
\end{lemma}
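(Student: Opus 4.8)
The plan is to argue by recursion on the structure of a natural number, in exactly the style of recursive (inductive) proof justified in \autoref{sec-2}. Throughout, I take the relation ``less than'' to be the intuitive arithmetic order on the \emph{values} that natural numbers represent, which our metatheory permits us to invoke directly; the content of the lemma is then that the \emph{set} $n$ produced by \autoref{natural-nums} has as its elements precisely the sets representing the values strictly below the value of $n$. To state this cleanly, for a natural number $n$ let $L(n)$ denote the collection of all natural numbers $k$ whose represented value is strictly less than that of $n$. Since there are only finitely many such $k$ and each is a set, $L(n)$ is itself a set by \autoref{def-set}. The goal is to show $n = L(n)$.

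The base case is immediate: if $n = \emptyset$, representing $0$, then no value is smaller, so $L(n) = \emptyset = n$. For the recursive step, suppose $n = m \cup \{m\}$ with $m$ a natural number, so that $n$ represents $\mathrm{val}(m)+1$. The recursion hypothesis gives $m = L(m)$, i.e.\ $m$ is exactly the set of representatives of the values $< \mathrm{val}(m)$. Invoking the intuitive arithmetic fact that $k < \mathrm{val}(m)+1$ holds if and only if $k < \mathrm{val}(m)$ or $k = \mathrm{val}(m)$, I obtain $L(n) = L(m) \cup R$, where $R$ is the collection of natural numbers representing the value $\mathrm{val}(m)$. Using $L(m) = m$ together with $R = \{m\}$ (justified below), this yields $L(n) = m \cup \{m\} = n$, which closes the recursion.

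The main obstacle is justifying $R = \{m\}$; equivalently, that each natural-number value is represented by one and only one set, so that $\mathrm{val}(\cdot)$ is well defined and injective. I would isolate this as a preliminary sub-claim, proved by its own recursion: (i) $\emptyset$ is never of the form $m \cup \{m\}$, so $0$ is not a successor, and (ii) the successor operation is injective, i.e.\ $m_1 \cup \{m_1\} = m_2 \cup \{m_2\}$ forces $m_1 = m_2$. Both reduce to the fact that no natural number is an element of itself, $m \notin m$; this guarantees that $m \cup \{m\}$ genuinely adjoins a new element and that the predecessor of a successor can be recovered unambiguously from the set, so distinct natural numbers represent distinct values. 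With injectivity in hand, $R$ contains only $m$ and the argument goes through. The one conceptual point to keep honest is non-circularity: I treat the intuitive arithmetic order as primitive and the set-membership structure as the object being characterized, so that the lemma genuinely establishes the coincidence of $n$ with its collection of predecessors rather than silently presupposing it.
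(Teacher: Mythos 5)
Your proposal is correct, and its core is the same induction the paper itself uses: the base case $n=\emptyset$ is trivial, and in the successor case $n = m\cup\{m\}$ one invokes the arithmetic fact that $k < \mathrm{val}(m)+1$ if and only if $k < \mathrm{val}(m)$ or $k = \mathrm{val}(m)$, together with the recursion hypothesis $m = L(m)$. Where you genuinely go beyond the paper is the sub-claim $R = \{m\}$: the paper's two-sentence proof simply identifies ``the numbers less than or equal to $m$'' with $m \cup \{m\}$, which tacitly assumes that each value has exactly one representing set (injectivity of the representation in \autoref{natural-nums}); you isolate this as a preliminary lemma and sketch its proof. That is a real gain in rigor, since without it $L(n)$ could a priori contain a second, distinct representative of $\mathrm{val}(m)$, and the identity $L(n) = m \cup \{m\}$ would not follow. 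One caveat on your sketch: successor-injectivity does not reduce to $m \notin m$ alone. From $m_1\cup\{m_1\} = m_2\cup\{m_2\}$ and $m_1 \neq m_2$ you obtain $m_1 \in m_2$ and $m_2 \in m_1$, and to refute this two-cycle you need, besides $m \notin m$, that natural numbers are transitive sets (so that $m_1 \in m_2 \in m_1$ yields $m_1 \in m_1$), or some equivalent exclusion of membership cycles. Both facts are provable by the same style of recursion, so this is a fixable gap inside your sub-claim rather than a flaw in the main argument; it does, however, show that the full justification is longer than either your sketch or the paper's proof suggests.
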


\begin{proof}
    We will prove this by induction. Note that this is trivially true for $0$.
    
    Consider $n$ and $m$, natural numbers, such that $n = m + 1$, and assume that $m$ is the set of all the numbers less than itself. The set of numbers less than $n$ is composed of all the numbers less than or equal to $m$. Note that this is $m \cup \{ m \}$, and we therefore obtain our desired result.
\end{proof}

This representation allows us to give the following definition:

\begin{definition}
     A \highlight{sequence} of elements of a set $A$ is a function $s$ such that $\dom(s)$ is a natural number, and $\ran(s) \subseteq A.$ If $\dom(s) = n,$ we say $s$ has length $n$ and we denote it by $\vert s \vert = n.$
\end{definition}

For simplicity, when we refer to sequences, we do not use function notation, but we will think of the sequences as ``words'' formed with the elements of $ A. $ For example, suppose that $A = \{a, b, c\}.$ Then,  $s = \{  (0, a), (1, b), (2, c) \}$ is a sequence of elements of $A$ of length $3$, and which we will denote simply as \textit{abc}. It is understood that the position occupied by the element in the sequence is the preimage under the  function $s$. On the other hand, the \highlight{restriction} of a sequence to a natural number $k$  is simply the sequence formed from its first $k$ ``letters''.  For example, $s \restrictto{2}$ is the sequence $\textit{ab}.$ 

Armed with the assurance that we are reasoning correctly by working within our metatheory and that our intuitive notions of sets are paradox-free, we can now begin to construct our system of first-order logic.

\section{Finite First-order Logic}\label{section4}

 Our aim in this section is to construct a system for which we can prove that it is sound—that is, show that it only proves ``true'' theorems. This will be sufficient for our purposes because once we know we are in a system that cannot lead us to falsehood, we can rederive formal logic from within the system, and trust its conclusions with the same surety as we do those of the metatheory.

\subsection{Syntax}

In this subsection, we aim to give the syntactic rules for our logical system. We begin by introducing the alphabet in which we will write our formulas.

\begin{definition}
    We will refer to the symbols $x_1, x_2, \ldots$, as \highlight{variables}. A \highlight{logical symbol}\footnote{Logical symbols can be thought of as sets, since, as done for the natural numbers in \autoref{natural-nums}, we can create a mapping between these symbols and sets that uniquely represent them. For example, we could have $x_n$ be $(1, n)$, ``$=$" be $(2, 1)$, ``$\in$" be $(3, 1)$, etc.} is either a variable or one of the following symbols: $=, \in, \neg, \vee,), (, \forall$.
\end{definition}

\begin{rdefinition}\label{def-formula}
    Given $\varphi$, a sequence of logical symbols, we say $\varphi$ is an \highlight{atomic formula} if it is of the form $x = y$ or $x \in y$, where $x$ and $y$ are variables. We say $\varphi$ is a \highlight{formula} if, and only if,  one of the following conditions holds: 
    
    \begin{enumerate}
        \item $\varphi$ is an atomic formula.
        
        \item $\varphi$ is of the form $\neg (\psi)$, where $\psi$ is a formula.
        
        \item $\varphi$ is of the form $(\psi) \vee (\chi)$, where $\psi$ and $\chi$ are formulas.
        
        \item$\varphi$ is of the form $\forall x (\psi)$, where $x$ is a variable and $\psi$ is a formula.
    \end{enumerate} 

    For formulas of the form $\forall x (\psi)$, we refer to the $\forall$ as a \highlight{universal quantifier}, and to $\psi$ as the \highlight{scope} of that quantifier.
\end{rdefinition}

The fundamental idea behind defining our formulas using parentheses is to eliminate all possible ambiguity, and therefore allow us to prove the unique readability theorem\footnote{Hereafter we will sometimes omit parentheses in formulas if there is no ambiguity.}. To that end, we first require a  result relating the number of left and right parentheses in a formula.

\begin{definition}
    Let $\cou(s)$ be the number of open parentheses in $s$ subtracted by the number of closed parentheses in $s$, where $s$ is a sequence of logical symbols. 
\end{definition}

\begin{lemma}\label{count-def}
    Let $\varphi$ be a formula. If $i \le |\varphi|$, then $\cou(\varphi \restrictto i) \geq 0$ and $\cou(\varphi) = 0$. 
\end{lemma}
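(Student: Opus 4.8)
The plan is to prove both assertions simultaneously, by recursion on the structure of $\varphi$ following the clauses of \autoref{def-formula}; as I explain at the end, bundling the two claims is what makes the recursion go through. Before starting I would record the elementary observation that $\cou$ is additive under concatenation: if $s$ and $t$ are sequences of logical symbols and $st$ denotes their concatenation, then $\cou(st) = \cou(s) + \cou(t)$, since the open (resp.\ closed) parentheses of $st$ are exactly those of $s$ together with those of $t$. I would also note that a prefix of $\varphi$ is precisely a sequence of the form $\varphi\restrictto{i}$ with $i \le |\varphi|$, so the first claim is the assertion that every prefix of $\varphi$ has nonnegative count. For the base case, an atomic $\varphi$ of the form $x = y$ or $x \in y$ contains no parentheses at all, whence $\cou(\varphi\restrictto{i}) = 0 \ge 0$ for every $i \le |\varphi|$ and $\cou(\varphi) = 0$.

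For the three recursive cases I would write $\varphi$ as the concatenation of its immediate constituents and apply additivity together with the inductive hypothesis. In case (2), $\varphi = \neg(\psi)$ consists of $\psi$ surrounded by one extra open and one extra close parenthesis (plus the parenthesis-free symbol $\neg$), so $\cou(\varphi) = \cou(\psi) + 1 - 1 = \cou(\psi) = 0$ by the hypothesis $\cou(\psi) = 0$. Cases (3) and (4), namely $\varphi = (\psi)\vee(\chi)$ and $\varphi = \forall x(\psi)$, are analogous: each clause introduces exactly as many open as close parentheses, so the total count again collapses to the (vanishing) counts of the constituent subformulas. Thus the total-count claim reduces, in every case, to checking by inspection that the literal parentheses contributed by the clause balance.

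The nonnegativity claim is proved in each recursive case by splitting on where the cut point $i$ falls relative to the constituents. For $\varphi = \neg(\psi)$, a prefix ending inside the initial $\neg($ has count $0$ or $1$; one ending past $\neg($ but inside $\psi$ has count $1 + \cou(\psi\restrictto{j}) \ge 1$ by the hypothesis; and one including the final $)$ has count $1 + 0 - 1 = 0$. The hard part, and the reason the two statements must travel together, appears in cases (3) and (4), where a prefix may run past a \emph{complete} constituent subformula into a later block of $\varphi$. To evaluate such a prefix I must add the net contribution of the whole traversed subformula to the running total, and this is available only because the hypothesis supplies $\cou(\psi) = 0$ (the total-count claim), not merely nonnegativity of its prefixes. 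For instance, for $\varphi = (\psi)\vee(\chi)$ a prefix reaching into $\chi$ has count $1 + \cou(\psi) - 1 + 1 + \cou(\chi\restrictto{j}) = 1 + \cou(\chi\restrictto{j}) \ge 1$, where the leading $1 + \cou(\psi) - 1$ accounts for the balanced block $(\psi)$ and uses $\cou(\psi) = 0$. Hence the total-count invariant is exactly what re-establishes nonnegativity after each balanced block, so neither claim can be proved in isolation, and it is the conjunction that the recursion verifies.
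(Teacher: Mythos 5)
Your proof is correct and follows essentially the same route as the paper's: a structural induction on $\varphi$ proving both claims simultaneously, with a case analysis on where the prefix $\varphi\restrictto{i}$ ends relative to the constituent subformulas. Your explicit concatenation-additivity observation and your remark on why the two claims must travel together are just clean articulations of what the paper's index-shifting computations (e.g.\ $\cou(\varphi \restrictto i) - 1 = \cou(\psi \restrictto {i-2})$) do implicitly.
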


\begin{proof}    
    Let $\varphi$ be a formula. We will prove this by induction on $\varphi$. If $\varphi$ is an atomic formula, the result follows since $\cou(\varphi \restrictto i) = 0$ for $i \leq |\varphi|$. Now, consider the case where $\varphi$ is of the form $\neg (\psi)$. Notice that $\cou(\varphi \restrictto 1) = 0$, $\cou(\varphi \restrictto 2) = 1$, and using our inductive hypothesis we have that $\cou(\varphi \restrictto i) - 1 = \cou(\psi \restrictto {i - 2}) \geq 0$ for $2 \leq i \leq |\varphi| - 1$. When $i = |\varphi| - 1$ we have that $\cou(\varphi \restrictto {|\varphi| - 1}) = \cou(\psi \restrictto {|\varphi| - 3}) + 1 = \cou(\psi) + 1 = 1$.  Since the last symbol in $\varphi$ is a closed parenthesis, it follows that $\cou(\varphi) = 0$. The result follows analogously for the case when $\varphi$ is of the form $\forall x (\psi)$. Finally, consider the case where $\varphi$ is of the form $(\psi) \vee (\chi)$. Reasoning along similar lines, we conclude that $\cou(\varphi \restrictto i) \geq 0$ for $i \leq |\psi| + 4$ and that $\cou(\varphi \restrictto {|\psi| + 4}) = 1$. Now, using our inductive hypothesis, we can see that $\cou(\varphi \restrictto i) - 1 = \cou(\chi \restrictto {i -  |\psi|  - 4})$, for $i > |\psi| + 4$. Lastly, we conclude that $\cou(\varphi) = 0$.
\end{proof}

We now have everything we need to state and prove the \highlight{unique readability theorem}, which, as we previously stated, eliminates possible ambiguities when interpreting formulas.

\begin{theorem}
    For a given formula $\varphi$, it is only a formula in a single one of the ways described in \autoref{def-formula}. Furthermore, in each case, the respective $\psi$, or $\psi$ and $\chi$, are uniquely determined. That is, for example, there are no different pairs $\psi$, $\chi$ and $\alpha$, $\beta$ such that $\varphi$ has the form $(\psi) \vee (\chi)$ and $(\alpha) \vee (\beta)$. 
\end{theorem}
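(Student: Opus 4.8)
The plan is to separate the statement into two independent claims and handle them in order: first, that the four clauses of \autoref{def-formula} are mutually exclusive for any fixed $\varphi$, and second, that within whichever clause applies, the subformula $\psi$ (or the pair $\psi,\chi$) is uniquely determined. For the mutual exclusivity I would simply inspect the first symbol of $\varphi$. An atomic formula begins with a variable, a negation $\neg(\psi)$ begins with $\neg$, a disjunction $(\psi) \vee (\chi)$ begins with $($, and a universally quantified formula $\forall x (\psi)$ begins with $\forall$. These four leading symbols are pairwise distinct, so at most one clause can apply, and the clause is read off from the first symbol alone.

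For the uniqueness of the components, the atomic, negation, and universal cases are immediate and require no counting argument. An atomic formula has length three, so its relation symbol and its two variables are read off directly from their positions. In the negation and universal cases the decomposition is forced by the sequence itself: if $\neg(\psi)$ and $\neg(\alpha)$ are the same sequence of symbols, then deleting the common leading symbols and the final closing parenthesis yields $\psi = \alpha$; likewise $\forall x (\psi) = \forall y (\alpha)$ gives $x = y$ (position two) and $\psi = \alpha$. There is simply no choice of where to place a main connective in these cases.

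The substantive case, and the step I expect to be the main obstacle, is the disjunction $(\psi) \vee (\chi)$, where a priori the connecting $\vee$ might be placed at several positions. Here I would invoke \autoref{count-def}. Reading the initial parenthesis gives $\cou(\varphi \restrictto 1) = 1$, and for every $j \le |\psi|$ we have $\cou(\varphi \restrictto {1+j}) = 1 + \cou(\psi \restrictto j) \ge 1$, since $\cou(\psi \restrictto j) \ge 0$ by the lemma applied to $\psi$. The count therefore stays at least $1$ throughout the first disjunct and only drops to $0$ once we pass the closing parenthesis terminating $\psi$, namely at index $k = |\psi| + 2$. The key point is that this smallest $k > 0$ with $\cou(\varphi \restrictto k) = 0$ is a function of $\varphi$ alone, independent of any chosen decomposition.

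Finally I would use this to conclude uniqueness. Suppose $\varphi$ equals both $(\psi) \vee (\chi)$ and $(\alpha) \vee (\beta)$. By the previous paragraph, both first disjuncts must close at the same index, the common first zero of $\cou(\varphi \restrictto{\cdot})$, so $|\psi| = |\alpha|$; since $\psi$ and $\alpha$ then occupy the same positions of the same sequence $\varphi$, we get $\psi = \alpha$. The symbols $\vee$ and $($ that follow are thereby fixed in position, and the remaining segment forces $\chi = \beta$. This rules out the existence of distinct pairs $\psi,\chi$ and $\alpha,\beta$ and completes the argument.
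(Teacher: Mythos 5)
Your proof is correct and takes essentially the same route as the paper's: mutual exclusivity is read off the first symbol, the atomic, $\neg$, and $\forall$ cases are handled by noting the decomposition positions are forced, and \autoref{count-def} does the real work in the disjunction case. The one difference is how that lemma is deployed: the paper supposes two decompositions with $|\alpha| < |\psi|$ and derives the contradiction $\cou(\psi \restrictto{|\alpha|+1}) = -1$, whereas you argue directly that the first disjunct must close at the least positive $k$ with $\cou(\varphi \restrictto{k}) = 0$, a quantity intrinsic to $\varphi$; your direct formulation is slightly longer but avoids reductio entirely, and so does not need the paper's footnote defending proof by contradiction as intuitionistically admissible here.
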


\begin{proof}
    Notice that the four possibilities for being a formula in \autoref{def-formula} are exclusive since they denote formulas that begin with different symbols. For example, atomic formulas all begin with a variable, whereas formulas under the second numeral all begin with the symbol $\neg$.
    
    Now let us prove that the way in which $\varphi$ is a formula is uniquely determined. Note that it is clear that atomic formulas and formulas that begin with $\neg$ or $\forall$, are uniquely determined.  Let us consider the case of a formula $\varphi$ that has the form $(\psi) \vee (\chi)$. Towards a contradiction\footnote{Normally proof by contradiction is not valid intuitionistically, but since what we aim to prove is ``not a statement that makes an infinitary claim" (see \cite[pg. 103]{phil}), it is a valid form of inference.}, assume that $\varphi$ also has the form $(\alpha) \vee (\beta)$, where $\psi \neq \alpha$ (it is clear that it cannot be the case that $\psi = \alpha$ and $\chi \neq \beta$). Without loss of generality assume that $|\alpha| < |\psi|$. By \autoref{count-def}, we see that since $\cou(\alpha) = 0$, we have that $\cou(\psi \restrictto {|\alpha|}) = 0$. Notice that the symbol following $\alpha$ in $\varphi$ is $)$, and as such, the symbol at position $|\alpha| + 1$ in $\psi$ is also $)$. This implies that $\cou(\psi \restrictto {|\alpha| + 1}) = \cou(\psi \restrictto {|\alpha|}) - 1 = -1$. This contradicts \autoref{count-def}, and we conclude that $\psi = \alpha$ and $\chi = \beta$.\
\end{proof}

\begin{definition}
    An occurrence of a variable $x$ in a formula $\varphi$ is \highlight{bound} if, and only if,  $x$ appears in a formula $\forall x (\psi)$, that itself appears in $\varphi$. An occurrence is \highlight{free} if, and only if,  it is not bound. $\varphi$ is a \highlight{sentence} if, and only if,  no variable is free in $\varphi$. \highlight{$V(\varphi)$} is the set of variables that have a free occurrence in $\varphi.$
\end{definition}

\begin{definition}
    If $\varphi$ is a formula, a \highlight{universal closure} of $\varphi$ is any sentence of the form $\forall x_1 \forall x_2 \dots \forall x_n \varphi$, where $n \geq 0$.
\end{definition}

To simplify our notation, here we introduce various abbreviations that correspond to the usual definitions of conjunction, implication, equivalence, and existential quantifier:
  
\begin{definition}
    If $\varphi$ and $\psi$ are formulas, we define the following:
    
    \begin{enumerate}
        \item ($\varphi) \wedge (\psi)$ abbreviates $\neg ((\neg(\varphi)) \vee (\neg (\psi))).$
        
        \item $(\varphi) \Rightarrow (\psi)$ abbreviates $(\neg(\varphi)) \vee (\psi)).$
        
        \item $(\varphi) \Leftrightarrow  (\psi)$ abbreviates $((\varphi) \Rightarrow (\psi)) \wedge ((\psi) \Rightarrow (\varphi)).$
        
        \item $\exists x (\varphi)$ abbreviates $\neg (\forall x (\neg (\varphi))).$
    \end{enumerate}
\end{definition}

\subsection{Semantics}

Truth is possibly the most fundamental concept in semantic logic. In common language, trying to define truth leads to antinomies such as the liar's paradox. However, in a formal language, we can rely on specific structural conditions to define the notion of \highlight{true statement}. This development is due to the logician Alfred Tarski who in 1931 managed to define this notion in a formal and non-contradictory way \cite{Tarski}. Broadly speaking, a statement is true if, and only if, it is \highlight{valid} in all \textit{models} and it is false if, and only if, there is no \textit{model} that satisfies it. By the end of this section, we will have defined this new terminology. We start by establishing the notation and the necessary concepts to define the \highlight{validity} of formulas relative to our language.

\begin{definition}\label{def-asig}
    An \highlight{assignment} for a formula $\varphi$ in a non-empty set $A$, is a function $\sigma$ such that $V(\varphi) \subseteq  \dom(\sigma)$ and $\ran(\sigma) \subseteq A.$
\end{definition}

\begin{notation}\label{notation rem}
    $\sigma + (y / a) \coloneqq \sigma\!\restriction_{V(\varphi) \setminus \{y \} } \cup \   \{(y,a)\}$.
\end{notation}

For example, let us consider $\varphi$ as the formula $\forall x (x = y) \vee (w = y)$. In this case, $V(\varphi) = \{ y, w \}$ and if $a,b, a_{y}, a_{w} \in A,$ then $\sigma \coloneqq \{ (y, a_{y}), (w, a_{w})\}$ is an assignment for $\varphi$ in $A.$ Furthermore, according to \autoref{notation rem}, we have that $\sigma + (y/a) = \{ (y, a), (w, a_{w}) \}$ and $(\sigma + (y/a)) + (w / b) = \{ (y, a), (w, b) \}.$ On the other hand, given that in \autoref{def-asig} it was only asked that $ V(\varphi) \subseteq \dom(\sigma),$ then $ \tau \coloneqq \{  (x, a), (y, a_{y}), (z, b), (w, a_{w}) \},$ where $z$ is a variable, is also an assignment for $\varphi$ in $A.$  Next, we recursively define the truth value of a formula with respect to a non-empty set $A$, and an assignment $\sigma$ for said formula in $A$. 

\begin{rdefinition}\label{truth definition}
    
    If $\sigma$ is an assignment for $\varphi$ in a non-empty set $A$, we define $\varphivalue \in \{0, 1\}$ as follows:
    
    \begin{enumerate}
        \item If $\varphi$ is of the form $x = y$: $\varphivalue = 1$ if, and only if,  $\sigma(x) = \sigma(y)$.

        \item If $\varphi$ is of the form $x \in y$: $\varphivalue = 1$ if, and only if,  $\sigma(x) \in \sigma(y)$.
        
        \item If $\varphi$ is of the form $\neg (\psi)$: $\varphivalue = 1 - \psivalue$.
        
        \item If $\varphi$ is of the form $(\psi) \vee (\chi)$: $\varphivalue = 1$ if, and only if,  $\psivalue = 1$ or $\chivalue = 1$.
        
        \item If $\varphi$ is of the form $\forall x (\psi)$: $\varphivalue = 1$ if, and only if,  ${\val}_{A}(\varphi)[\sigma + (x/a)] = 1$ for all $a \in A$.
    \end{enumerate}
    
    If $\varphivalue = 1,$ we say that $\varphi[\sigma]$ is valid in $A$ and we denote it as $A \models \varphi[\sigma]$.
\end{rdefinition}

From this definition, we obtain an algorithm that assigns, to each formula $\varphi$ and each assignment $\sigma$ for $\varphi$ in $A$, a truth value, $\varphivalue \in \{ 0, 1\}$. By appealing to the unique readability theorem it is clear that this algorithm always assigns the same value to a given formula and assignment. That is, if we think informally of $\val_{A}(\varphi)[\sigma]$ as a function, then:

\begin{lemma}
    $\varphivalue$ is well-defined.
\end{lemma}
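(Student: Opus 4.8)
The plan is to prove well-definedness by induction on the structure of $\varphi$, mirroring the five clauses of \autoref{truth definition}, with the unique readability theorem supplying the crucial guarantee that the recursion never faces competing decompositions. For us, ``well-defined'' amounts to two claims: the algorithm induced by \autoref{truth definition} terminates, and it returns a single value in $\{0,1\}$ for each pair $(\varphi,\sigma)$. Termination is immediate, since every recursive call is made on a proper subformula, which is strictly shorter; as $\varphi$ has finite length, the recursion bottoms out at atomic formulas after finitely many steps. The real content of the lemma is thus the uniqueness of the returned value.

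For the base cases, $\varphi$ is atomic, of the form $x = y$ or $x \in y$. Because $\sigma$ is a function, $\sigma(x)$ and $\sigma(y)$ are uniquely determined elements of $A$, and the propositions $\sigma(x) = \sigma(y)$ and $\sigma(x) \in \sigma(y)$ are decidable by the finitistic procedures for comparing and inspecting finite sets developed in our set metatheory; hence $\varphivalue$ is a single, well-determined element of $\{0,1\}$. For the inductive step I would treat the three compound clauses. First, the unique readability theorem guarantees that $\varphi$ falls under exactly one of the clauses (2)--(4) and that its immediate components are uniquely determined: there is only one $\psi$ with $\varphi = \neg(\psi)$, only one pair $(\psi,\chi)$ with $\varphi = (\psi) \vee (\chi)$, and only one pair $(x,\psi)$ with $\varphi = \forall x(\psi)$. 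In the negation and disjunction cases the induction hypothesis assigns a unique value to $\psivalue$ (and $\chivalue$), and clauses (3)--(4) then compute $\varphivalue$ by a fixed Boolean rule, so the result is unique.

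The universal case $\forall x(\psi)$ requires one extra check: that for each $a \in A$ the object $\sigma + (x/a)$ is a genuine assignment for $\psi$ in $A$ in the sense of \autoref{def-asig}. Since $V(\varphi) = V(\psi) \setminus \{x\}$, we have $\dom(\sigma + (x/a)) = (V(\varphi) \setminus \{x\}) \cup \{x\} \supseteq V(\psi)$ and $\ran(\sigma + (x/a)) \subseteq \ran(\sigma) \cup \{a\} \subseteq A$, so the induction hypothesis legitimately applies to $\psi$ under $\sigma + (x/a)$. It then assigns a unique value to ${\val}_A(\psi)[\sigma + (x/a)]$ for each of the finitely many $a \in A$, and clause (5) sets $\varphivalue = 1$ precisely when all of these equal $1$, again a uniquely determined outcome.

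The main obstacle here is conceptual rather than computational: the recursion must be genuinely deterministic, and this is exactly what unique readability buys us. Without it, a formula of the form $(\psi) \vee (\chi)$ might admit a second splitting $(\alpha) \vee (\beta)$ that could in principle yield a different truth value, rendering the definition ambiguous. Once uniqueness of decomposition is secured, the induction is routine; I would only be careful, in the universal clause, to record that modifying $\sigma$ at $x$ keeps us within the class of legitimate assignments so that the induction hypothesis truly applies.
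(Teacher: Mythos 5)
Your proof is correct and takes essentially the same approach as the paper, which justifies the lemma simply by observing (in the paragraph preceding it) that the unique readability theorem guarantees the recursive truth-value algorithm always returns the same value for a given formula and assignment; you spell out the structural induction that the paper leaves implicit. Your additional check in the quantifier case that $\sigma + (x/a)$ is a legitimate assignment for $\psi$ is a sound detail the paper does not bother to record.
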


As expected, the truth value of a function with a given assignment depends only on the assignment of the variables that appear in the formula. In other words:

\begin{lemma}\label{value-only}
    If $\tau$ and $\sigma$ are assignments for $\varphi$ in a non-empty set $A$ and $\tau \restrictto{V(\varphi)} = \sigma \restrictto{V(\varphi)},$ then  $\varphivalue = \val_{A}(\varphi)[\tau].$
\end{lemma}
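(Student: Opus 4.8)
The plan is to argue by recursion on the structure of $\varphi$, following \autoref{def-formula}. This is exactly the shape of intuitionistic universal-quantification proof described in \autoref{sec-2}: the recursion furnishes a recipe that, given any pair of assignments agreeing on $V(\varphi)$, outputs a verification that their truth values coincide. Throughout, the inductive hypothesis is that the claim already holds for every proper subformula, and unique readability guarantees that exactly one clause of \autoref{def-formula} applies to $\varphi$, so the recursion is well-founded and the cases are genuinely disjoint.

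For the base cases, $\varphi$ is atomic, of the form $x = y$ or $x \in y$; then $V(\varphi) = \{x, y\}$, so the hypothesis $\sigma\restrictto{V(\varphi)} = \tau\restrictto{V(\varphi)}$ gives $\sigma(x) = \tau(x)$ and $\sigma(y) = \tau(y)$. Clauses (1)--(2) of \autoref{truth definition} compute $\varphivalue$ and $\varphivaluep$ from exactly these values, so they agree. When $\varphi$ is $\neg(\psi)$ we have $V(\varphi) = V(\psi)$, so $\sigma$ and $\tau$ agree on $V(\psi)$; the inductive hypothesis gives $\psivalue = \psivaluep$, and clause (3) yields $\varphivalue = 1 - \psivalue = 1 - \psivaluep = \varphivaluep$. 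When $\varphi$ is $(\psi) \vee (\chi)$ we have $V(\varphi) = V(\psi) \cup V(\chi)$, so $\sigma$ and $\tau$ agree on each of $V(\psi)$ and $V(\chi)$; two applications of the inductive hypothesis give $\psivalue = \psivaluep$ and $\chivalue = \chivaluep$, and clause (4) finishes the case.

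The one case demanding care is $\varphi = \forall x(\psi)$, where $V(\varphi) = V(\psi) \setminus \{x\}$ and clause (5) reduces the value of $\varphi$ to the values of $\psi$ under the shifted assignments $\sigma + (x/a)$ and $\tau + (x/a)$, ranging over all $a \in A$. To invoke the inductive hypothesis on $\psi$, I must first check that $\sigma + (x/a)$ and $\tau + (x/a)$ agree on $V(\psi)$. Unwinding \autoref{notation rem}, both send $x$ to $a$, so they agree at $x$; and for $y \in V(\psi)$ with $y \neq x$ we have $y \in V(\psi) \setminus \{x\} = V(\varphi)$, while the definition of $+$ leaves the value on $V(\varphi) \setminus \{x\}$ untouched, so $(\sigma + (x/a))(y) = \sigma(y) = \tau(y) = (\tau + (x/a))(y)$, the middle equality being the hypothesis on $V(\varphi)$. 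Hence the two shifted assignments agree on all of $V(\psi)$, the inductive hypothesis gives $\val_A(\psi)[\sigma + (x/a)] = \val_A(\psi)[\tau + (x/a)]$ for each $a$, and since this holds uniformly in $a$, clause (5) yields $\varphivalue = \varphivaluep$.

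I expect this $\forall$ step to be the main obstacle, since it is the only place where the assignments are modified and where one must reconcile the precise bookkeeping of \autoref{notation rem} (what $+$ does to the domain and to the values on $V(\varphi)$) with the identity $V(\forall x(\psi)) = V(\psi) \setminus \{x\}$. Once that verification is in place, the remaining cases follow directly from the corresponding clause of \autoref{truth definition} together with the inductive hypothesis.
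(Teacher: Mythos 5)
Your proof is correct and takes essentially the same approach as the paper: structural induction on $\varphi$, using agreement of the assignments on the free variables of each subformula. In fact, your explicit verification that $\sigma + (x/a)$ and $\tau + (x/a)$ agree on $V(\psi)$ in the $\forall x(\psi)$ case is more careful than the paper's own proof, which only works out the atomic and disjunction cases and dismisses the quantifier case as ``analogous,'' even though it is the one case where the assignments are modified.
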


\begin{proof}
    Let $\varphi$ be a formula and let $\tau$ and $\sigma$ be assignments for $\varphi$ in a non-empty set $A$, such that $\tau\!\restriction_{V(\varphi)} \ = \sigma\!\restriction_{V(\varphi)}$. We will prove this lemma by induction on $\varphi.$
    
    Assume $\varphi$ is an atomic formula of the form $x \in y$. This implies that  $x, y \in V(\varphi)$ and therefore that $\sigma(x) = \tau(x)$ and $\sigma(y) = \tau (y)$. It follows that, $$\varphivalue = 1 \text{ iff }  \sigma(x) \in \sigma(y) \text{ iff }  \tau(x) \in \tau(y) \text{ iff }  \varphivaluep = 1.$$ Thus, $\varphivalue = \val_{A}(\varphi)[\tau].$ In the case where $\varphi$ is of the form $x = y$ the result follows analogously.
    
    Notice that for any subset $U$ of $V(\varphi)$, we have that $\sigma\!\restriction_{U} \ = \tau\!\restriction_{U}$. Now suppose that $\varphi$ is the form $(\psi) \vee (\chi)$. It is clear that $V(\psi), V(\chi) \subseteq V(\varphi)$ and therefore it then follows that, 
    \begin{equation*}
        \begin{split}
            \varphivalue = 1 & \text{ iff }  \val_{A}(\psi)[\sigma] = 1 \text{ or } \val_{A}(\chi)[\sigma] = 1 \\
            & \text{ iff }  \val_{A}(\psi)[\tau] = 1 \text{ or } \val_{A}(\chi)[\tau] = 1 \\
            & \text{ iff }  \varphivaluep = 1. \\
        \end{split}
    \end{equation*}
    
    Thus $\varphivalue = \val_{A}(\varphi)[\tau]$. The result would follow analogously if $\varphi$ was of the form $\neg (\psi)$ or $\forall x (\psi)$.
\end{proof}

\subsection{Propositional tautologies and logical validity}\label{tautologies}

In everyday language, a tautology is an obvious or redundant assertion that is true under every possible interpretation. Similarly, in the context of logic, a propositional tautology is a formula that is ascertained to always be true solely by virtue of the meaning of the propositional connectives it is composed of, without reference to the meaning of the symbols $\forall$, $\in$, or $=$. For example, the formula $\varphi \wedge \neg \varphi$ is a propositional tautology since its truth value is always $1$ independent of the content of $\varphi$. On the other hand, $\forall x (\varphi(x)) \Rightarrow \forall y (\varphi (y))$ is not one since its truth value depends on the way we interpret the symbol $\forall$. Next, we give a recursive definition of a propositional tautology:

\begin{rdefinition}
    A \highlight{basic formula} is a formula that is not of the form $\neg (\psi)$ or $(\psi) \vee (\chi)$. A \highlight{truth assignment} for $\varphi$ is a function that maps the basic formulas that appear in $\varphi$ into $\{0, 1\}$. Given such a truth assignment, $v$, we extend it into $\overline{v} \in \{0, 1\}$ as follows:
    
    \begin{enumerate}

        \item If $\varphi$ is of the form $\neg (\psi)$: $\overline{v}(\varphi)$ = $1 - \overline{v}(\psi)$.
        
        \item If $\varphi$ is of the form $(\psi) \vee (\chi)$: $\overline{v}(\varphi) = 1$ if, and only if, $\overline{v}(\psi) = 1$ or $\overline{v}(\chi) = 1$.
    \end{enumerate}
    
    We say that $\varphi$ is a \highlight{propositional tautology} if, and only if $\overline{v}(\varphi) = 1$ for all truth assignments $v$.
\end{rdefinition}

It is not puzzling, given this definition, that propositional tautologies will always be valid independent of the set and assignment used to calculate their truth value. Formulas that have this property are precisely the ones that Tarski conceived as logical truths and we define them in the following way:

\begin{definition}
    The formula $\varphi$ is \highlight{logically valid} if, and only if, for each non-empty set $A, $ $A \models \varphi[\sigma]$ for all assignments $\sigma$ for $\varphi$ in $A.$    
\end{definition}

For example, if $\varphi$ is $\forall x (x = x),$ then $\varphi$ is logically valid. Indeed, consider a non-empty set $A,$ an assignment for $\varphi$ in $A$ and $a \in A.$ Note that, by \autoref{truth definition}, $\val_{A}(x = x)[\sigma + (x / a)] = 1 \text{ iff }  (\sigma + (x / a))(x) = (\sigma + (x / a))(x) \text{ iff }  a = a.$ Therefore, for all $a \in A,$ we have that $\val_{A}(x = x)[\sigma + (x/a)] = 1;$ that is, $A \models \varphi[\sigma]$. Therefore $\forall x (x = x)$ is logically valid. And as mentioned above, propositional tautologies are also cases of logically valid formulas:

\begin{lemma}\label{tautologies valid}
    Every propositional tautology is logically valid. 
\end{lemma}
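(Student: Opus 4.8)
The plan is to fix an arbitrary non-empty set $A$ together with an arbitrary assignment $\sigma$ for $\varphi$ in $A$, and to manufacture from $\sigma$ a single truth assignment $v_\sigma$ whose propositional extension $\overline{v_\sigma}$ agrees on $\varphi$ with the semantic value $\val_A(\cdot)[\sigma]$. The tautology hypothesis supplies $\overline{v_\sigma}(\varphi) = 1$, so once I establish $\overline{v_\sigma}(\varphi) = \val_A(\varphi)[\sigma]$ I will have $\val_A(\varphi)[\sigma] = 1$, i.e. $A \models \varphi[\sigma]$; as $A$ and $\sigma$ are arbitrary, this is precisely logical validity.

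Before running the induction I must check that $v_\sigma$ is a legitimate truth assignment, and more to the point that the basic formulas on which $\overline{v_\sigma}(\varphi)$ actually depends are ones for which $\val_A(\cdot)[\sigma]$ makes sense. The extension rule for $\overline{v}$ consults $v$ only at the leaves of the $\neg/\vee$-decomposition of $\varphi$, that is, at its maximal basic subformulas, and this decomposition never descends inside the scope of a quantifier, since a formula $\forall x(\psi)$ is itself basic and the recursion halts there. Consequently every such leaf $\beta$ occurs in $\varphi$ outside all quantifiers, so each free variable of $\beta$ is free in $\varphi$; that is, $V(\beta) \subseteq V(\varphi) \subseteq \dom(\sigma)$, whence $\sigma$ is an assignment for $\beta$ and $\val_A(\beta)[\sigma]$ is well-defined. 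I therefore set $v_\sigma(\beta) \coloneqq \val_A(\beta)[\sigma]$ at these leaves, assigning an irrelevant value of $0$ to any remaining basic formula.

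The heart of the argument is the claim that $\overline{v_\sigma}(\psi) = \val_A(\psi)[\sigma]$ for every $\psi$ arising in the $\neg/\vee$-decomposition of $\varphi$, proved by induction on $\psi$. In the base case $\psi$ is a basic leaf, so $\overline{v_\sigma}(\psi) = v_\sigma(\psi) = \val_A(\psi)[\sigma]$ by the definition of $v_\sigma$. If $\psi$ is $\neg(\theta)$, then clause (3) of \autoref{truth definition} and the inductive hypothesis give $\overline{v_\sigma}(\psi) = 1 - \overline{v_\sigma}(\theta) = 1 - \val_A(\theta)[\sigma] = \val_A(\psi)[\sigma]$. If $\psi$ is $(\theta) \vee (\eta)$, then $\overline{v_\sigma}(\psi) = 1$ iff $\overline{v_\sigma}(\theta) = 1$ or $\overline{v_\sigma}(\eta) = 1$ iff, by the inductive hypothesis, $\val_A(\theta)[\sigma] = 1$ or $\val_A(\eta)[\sigma] = 1$ iff $\val_A(\psi)[\sigma] = 1$ by clause (4); since both quantities lie in $\{0,1\}$ they are equal. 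The values of $v_\sigma$ relevant to $\theta$ and $\eta$ are simply its values on a subset of the leaves of $\varphi$, so no difficulty arises from passing to subformulas. Taking $\psi = \varphi$ finishes the proof.

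I expect the sole genuine obstacle to be the well-definedness check of the second paragraph, namely confirming that no basic leaf of the propositional skeleton can conceal a free variable lying outside $\dom(\sigma)$; the observation that the $\neg/\vee$-decomposition never enters a quantifier scope is exactly what guarantees this. Everything else is a routine structural induction that mirrors the two clauses defining $\overline{v}$ against clauses (3) and (4) of \autoref{truth definition}.
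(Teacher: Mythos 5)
Your proof is correct and follows essentially the same route as the paper's: both construct a truth assignment $v$ from $\sigma$ by setting $v(\beta) = \val_A(\beta)[\sigma]$ on the basic subformulas of $\varphi$, then argue by induction on the $\neg/\vee$-structure that $\overline{v}$ agrees with $\val_A(\cdot)[\sigma]$, so that the tautology hypothesis yields $A \models \varphi[\sigma]$. The only difference is that you spell out two steps the paper declares ``clear''---the well-definedness check $V(\beta) \subseteq V(\varphi) \subseteq \dom(\sigma)$ and the explicit structural induction---which is a faithful elaboration rather than a different argument.
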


\begin{proof}
    Let $\varphi$ be a propositional tautology, $A$ a non-empty set, and $\sigma$ an assignment for $\varphi$ in $A$. It is clear that we can construct a truth assignment $v$ such that for all basic formulas $\psi$ in $\varphi$, we have that $v(\psi) = \psivalue$. Notice that the way in which we extend $v$ into $\overline{v}$ matches exactly with the way that $\psivalue$ is constructed from its composite parts. As a result, we have that $\overline{v}(\psi) = \psivalue$ for all subformulas $\psi$ in $\varphi$. In particular, we have that $\overline{v}(\varphi) = \varphivalue$, and since $\overline{v}(\varphi) = 1$ by virtue of it being a propositional tautology, we have that $A \models \varphi [\sigma]$. We conclude that $\varphi$ is logically valid.
\end{proof}

\subsection{Some relations between syntactic and semantic notions}

In this section, we define some syntactic notions that will be necessary later and we also prove various semantic results that relate to these notions.

\begin{definition}
    If $\varphi$ is a formula and $x$ and $y$ are variables, then $\varphi(x \, {\leadsto} \,  y)$ is the formula that results from $\varphi$ by replacing all free occurrences of $x$ by $y$. 
\end{definition}

It is clear that $\varphi(x \, {\leadsto} \,  y)$ is a formula. Informally, we can see that $ \varphi(x \, {\leadsto} \,  y) $ expresses about $ y $ what $ \varphi $ does about $x.$ Now, let us consider an example: if $ \varphi $ is the formula $ \forall x ((x \in y) \vee (y \in x)) $ and $ z $ is a variable, then $ \varphi (y \, {\leadsto} \,  z) $ is the formula $ \forall x ((x \in z) \vee (z \in x)). $ Also, since no occurrence of $ x $ appears free in $ \varphi, $ then $\varphi (x \, {\leadsto} \,  z) $ is $ \varphi. $

\begin{definition}
    A variable $y$ is \highlight{free for} a variable $x$ in a formula $\varphi$ if, and only if, no free occurrence of $x$ appears in any subformula $\forall y (\psi)$, that itself appears in $\varphi.$
\end{definition}

For example, consider $ \varphi $ as the formula $ \forall y (y = z) \vee x \in y. $ Since no free occurrence of $ x $ appears under the scope of a quantifier $ \forall y , $ it follows that $ y $ is free for $ x $ in $ \varphi. $ Also, if $ \psi $ is the formula $ \forall x (x \in y), $ then it is clear that $ x $ is not free for $y $ in $ \psi. $

The next lemma, which we call the \highlight{substitution lemma}, is a result that allows us to calculate the truth value of $ \varphi (x \, {\leadsto} \,  y) [\sigma] $ in terms of the truth value of $ \varphi $ and $ \sigma (y)$:

\begin{lemma}\label{sustitution-lemma}
    Let $A$ be a non-empty set, $\varphi$ a formula, $y$ a variable, and $\sigma$ a assignment for $\varphi$ in $A$ such that $y \in \dom(\sigma).$ If $y$ is free for $x$ in $\varphi,$ then $$ \val_{A}(\varphi(x \, {\leadsto} \,  y))[\sigma] = \val_{A}(\varphi)[\sigma + (x / \sigma(y))]. $$
\end{lemma}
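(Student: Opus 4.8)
The plan is to prove the substitution lemma by induction on the structure of $\varphi$, following the recursive definition of formulas (\autoref{def-formula}) and the recursive definition of truth value (\autoref{truth definition}). Throughout, I will write $\tau \coloneqq \sigma + (x/\sigma(y))$ to lighten notation, and I will repeatedly use \autoref{value-only}, which guarantees that only the restriction of an assignment to the free variables matters. The key conceptual point is that the substitution $\varphi(x \leadsto y)$ operates on the syntax (replacing free occurrences of $x$ by $y$) while the modified assignment $\tau$ operates on the semantics (reassigning $x$ the value $\sigma(y)$), and these two operations should produce the same truth value precisely because $y$ being free for $x$ prevents any occurrence of $x$ from being ``captured'' by a $\forall y$ quantifier.

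First I would handle the atomic base case. If $\varphi$ is $u = v$ or $u \in v$ with $u,v$ variables, then $\varphi(x \leadsto y)$ replaces $u$ (resp. $v$) by $y$ exactly when it equals $x$. I would verify by a short case analysis on whether $u = x$ and/or $v = x$ that $(\varphi(x \leadsto y))$ evaluated under $\sigma$ consults the same underlying elements of $A$ as $\varphi$ evaluated under $\tau$: where $\varphi$ reads $\tau(x) = \sigma(y)$, the substituted formula reads $\sigma(y)$ directly, and where the variable is not $x$, say $w \neq x$, we have $\tau(w) = \sigma(w)$ and the substitution leaves $w$ unchanged. Hence both sides compute the same value. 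The connective cases $\neg(\psi)$ and $(\psi) \vee (\chi)$ are then immediate, since substitution distributes over the connectives (free occurrences in $\neg(\psi)$ or $(\psi)\vee(\chi)$ are exactly the free occurrences in the components), and the inductive hypothesis applies to $\psi$ and $\chi$; one just pushes the recursive clauses of \autoref{truth definition} through.

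The main obstacle is the quantifier case $\varphi = \forall z (\psi)$, which splits according to whether $z = x$ or $z \neq x$. If $z = x$, then $x$ has no free occurrence in $\varphi$, so $\varphi(x \leadsto y)$ is just $\varphi$; moreover $\tau$ and $\sigma$ agree on $V(\varphi)$ (since $x \notin V(\varphi)$), so \autoref{value-only} closes this subcase. If $z \neq x$, I would evaluate using clause (5): $\val_A(\forall z(\psi(x\leadsto y)))[\sigma] = 1$ iff $\val_A(\psi(x \leadsto y))[\sigma + (z/a)] = 1$ for all $a \in A$. The crux is to apply the inductive hypothesis to $\psi$ with the assignment $\sigma + (z/a)$, which requires checking that the hypotheses of the lemma still hold: that $y$ is free for $x$ in $\psi$ (inherited from $\varphi$), and critically that $(\sigma + (z/a))(y) = \sigma(y)$. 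This last equality is exactly where the hypothesis ``$y$ is free for $x$ in $\varphi$'' does its essential work: because no free occurrence of $x$ sits under a $\forall y$, the quantified variable $z$ must be distinct from $y$ whenever $x$ occurs freely beneath it, so reassigning $z$ does not disturb the value of $y$. After establishing $(\sigma+(z/a))+(x/\sigma(y)) = (\sigma+(x/\sigma(y)))+(z/a) = \tau + (z/a)$, the inductive hypothesis yields that the inner value equals $\val_A(\psi)[\tau + (z/a)]$ for all $a$, which by clause (5) is precisely $\val_A(\forall z(\psi))[\tau]$, completing the induction.
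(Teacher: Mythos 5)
Your overall strategy is the same as the paper's: structural induction on $\varphi$, with the atomic and connective cases routine and the real work in the quantifier case, using \autoref{value-only} to discard irrelevant parts of assignments. The paper, however, begins by dispatching once and for all the case in which $x$ has no free occurrence in $\varphi$ (there $\varphi(x \,{\leadsto}\, y)$ is literally $\varphi$, and the value of $\sigma$ at $x$ is irrelevant), and only then runs the induction under the standing assumption that $x$ is free in $\varphi$. In the case $\varphi = \forall z (\psi)$, that assumption yields $z \neq x$, and combined with ``$y$ is free for $x$ in $\varphi$'' it also yields $z \neq y$, so updating $z$ never disturbs the values of $x$ or $y$.

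Your proof skips this preliminary reduction, and that leaves a genuine (though easily repaired) hole in your quantifier case. Your split is $z = x$ versus $z \neq x$, and in the branch $z \neq x$ you need $(\sigma + (z/a))(y) = \sigma(y)$, which you justify by saying $z$ must differ from $y$ ``whenever $x$ occurs freely beneath it.'' But nothing in your case analysis excludes the subcase $z = y$ with $x$ not free in $\psi$ (for instance $\varphi = \forall y\, (y = y)$ with $x \neq y$): there $y$ is vacuously free for $x$ in $\varphi$, you are in your $z \neq x$ branch, and $(\sigma + (y/a))(y) = a$ need not equal $\sigma(y)$, so your displayed chain of equalities is not justified. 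The conclusion still holds in that subcase—$x$ is then not free in $\varphi$, so $\varphi(x \,{\leadsto}\, y)$ is $\varphi$ and \autoref{value-only} finishes, exactly the argument you already use when $z = x$—but as written your induction step does not cover it. Either add this subcase explicitly, or adopt the paper's structure and dispose of ``$x$ not free in $\varphi$'' before inducting; with that repair your proof coincides with the paper's.
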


\begin{proof}    
   Notice that, if $x$ does not appear free in $\varphi,$ then $\varphi(x \, {\leadsto} \,  y)$ is $\varphi$, and the value assigned by $\sigma$ to $x$ is irrelevant and therefore the result is clear. Therefore, suppose that $x$ appears free in $\varphi$ and we will apply induction on $\varphi.$ The base case is when $\varphi$ is atomic. Therefore, $\varphi$ has one of the following forms: $x \in w, w \in x, x = w $ or $w = x,$ where $w$ is a variable. Let us  verify, for example, the case when  $\varphi$ is of the form $x \in w$; the other cases are completely analogous. Notice that, in this case, $\varphi(x \, {\leadsto} \,  y)$ is $y \in w$, and further, $(\sigma + (x / \sigma(y)))(x) = \sigma(y)$ and $(\sigma + (x / \sigma(y)))(w) = \sigma(w).$ Therefore, we have that:
    \begin{equation*}
        \begin{split}
            \val_{A}(\varphi(x \, {\leadsto} \,  y))[\sigma] = 1 & \text{ iff }  \sigma(y) \in \sigma(w)\\
            & \text{ iff } (\sigma + (x / \sigma(y)))(x) \in (\sigma + (x / \sigma(y)))(w)\\
            & \text{ iff } \val_{A}(\varphi)[\sigma + (x / \sigma(y))] = 1. 
        \end{split}
    \end{equation*}  
    
    Now, suppose that $\varphi$ is of the form $\neg (\psi).$ It is clear that $\varphi(x \, {\leadsto} \,  y) = \neg \psi(x \, {\leadsto} \,  y),$ hence by induction hypothesis we have that: 
    \begin{equation*}
        \begin{split}
            \val_{A}(\varphi(x \, {\leadsto} \,  y))[\sigma] = 1 & \text{ iff }  \val_{A}(\neg(\psi(x \, {\leadsto} \,  y)))[\sigma] = 1\\
            & \text{ iff }  \val_{A}(\psi(x \, {\leadsto} \,  y))[\sigma] = 0\\
            & \text{ iff }  \val_{A}(\psi)[\sigma + (x / \sigma(y))] = 0\\
            & \text{ iff }  \val_{A}(\neg (\psi))[\sigma + (x/\sigma(y))] = 1\\
            &\text{ iff }  \val_{A}(\varphi)[\sigma + (x / \sigma(y))] = 1.
        \end{split}
    \end{equation*}
    Now, if $\varphi$ is of the form $(\psi) \vee (\chi),$ it is clear that $\varphi(x \, {\leadsto} \,  y) $ is $ (\psi(x \, {\leadsto} \,  y)) \vee (\chi(x \, {\leadsto} \,  y))$ and therefore, the reasoning is similar to the above.
    
    Finally, suppose that $\varphi$ is of the form $\forall z (\psi).$ Since $x$ is free in $\phi$ then $x$ must not be equal to $z$. Similarly, since $y$ is free for $x$ in $\varphi,$ it also follows that $y$ is not equal to $z$. So we have that $\varphi(x \, {\leadsto} \,  y) $ is $ \forall z(\psi(x \, {\leadsto} \,  y)).$ Applying the induction hypothesis, it follows that:
    \begin{equation*}
        \begin{split}
            \val_{A}(\varphi(x \, {\leadsto} \,  y))[\sigma] = 1 &\text{ iff }  \val_{A}(\forall z (\psi(x \, {\leadsto} \,  y)))[\sigma] = 1\\
            & \text{ iff }  (\val_{A}(\psi(x \, {\leadsto} \,  y))[\sigma + (z/ a)] = 1) \text{ for all} \ a \in A\\
            & \text{ iff }  (\val_{A}(\psi)[\sigma + (x / \sigma(y)) + (z / a)] = 1) \text{ for all} \  a \in A\\
            & \text{ iff }  \val_{A}(\forall z (\psi))[\sigma + (x / \sigma(y))] = 1\\
            & \text{ iff }  \val_{A}(\varphi)[\sigma + (x / \sigma(y))] = 1.
        \end{split}
    \end{equation*}
    Finally, $\val_{A}(\varphi(x \, {\leadsto} \,  y))[\sigma] = \val_{A}(\varphi)[\sigma + (x / \sigma(y))]. $
\end{proof}

In \autoref{tautologies} we saw an example of a logically valid formula, and we saw that tautologies themselves are also logically valid. The following result shows us more interesting formulas that are also logically valid, and which will subsequently appear in the axioms of first-order logic.

\begin{corollary}\label{axioms-valid-cor}
    The following formulas are logically valid:
    
    \begin{enumerate}
         \item $\varphi \Rightarrow  \forall x (\varphi)$, where $x$ is not free in $\varphi.$
        
        \item $\forall x (\varphi \Rightarrow \psi) \Rightarrow (\forall x (\varphi) \Rightarrow \forall x (\psi)).$
        
        \item $\forall x (\varphi) \Rightarrow \varphi(x \, {\leadsto} \,  y),$ where $y$ is free for $x$ in $\varphi.$ 
    \end{enumerate}
\end{corollary}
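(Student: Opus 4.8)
The plan is to treat the three formulas uniformly: fix a non-empty set $A$ and an arbitrary assignment $\sigma$, then compute the truth value straight from \autoref{truth definition}, having first unfolded the abbreviation $(\alpha) \Rightarrow (\beta) = (\neg(\alpha)) \vee (\beta)$. Under this reading, "$\val_A((\alpha) \Rightarrow (\beta))[\sigma] = 1$" says exactly "$\val_A(\alpha)[\sigma] = 0$ or $\val_A(\beta)[\sigma] = 1$", so in each case it suffices to assume the antecedent has value $1$ and deduce that the consequent does. For the nested implication in (2) I peel off the antecedents one at a time.

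For (1), after assuming $\val_A(\varphi)[\sigma] = 1$ I must show $\val_A(\forall x(\varphi))[\sigma] = 1$, that is, $\val_A(\varphi)[\sigma + (x/a)] = 1$ for every $a \in A$. The crux is that $x$ is not free in $\varphi$, so $x \notin V(\varphi)$ and hence $(\sigma + (x/a))\restrictto{V(\varphi)} = \sigma\restrictto{V(\varphi)}$; \autoref{value-only} then yields $\val_A(\varphi)[\sigma + (x/a)] = \val_A(\varphi)[\sigma] = 1$ for all $a$. For (2) I assume in turn $\val_A(\forall x(\varphi \Rightarrow \psi))[\sigma] = 1$ and $\val_A(\forall x(\varphi))[\sigma] = 1$ and fix an arbitrary $a \in A$; the first hypothesis, read through clause (5), gives $\val_A(\varphi)[\sigma + (x/a)] = 0$ or $\val_A(\psi)[\sigma + (x/a)] = 1$, while the second forces the first disjunct to fail, so $\val_A(\psi)[\sigma + (x/a)] = 1$. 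Since $a$ was arbitrary, $\val_A(\forall x(\psi))[\sigma] = 1$. This case is purely a matter of unwinding the $\forall$-clause.

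For (3), where the genuine content lies, I assume $\val_A(\forall x(\varphi))[\sigma] = 1$ and must show $\val_A(\varphi(x \leadsto y))[\sigma] = 1$. The engine is the substitution lemma, \autoref{sustitution-lemma}: since $y$ is free for $x$ in $\varphi$ (and $y \in \dom(\sigma)$, discussed below), it gives $\val_A(\varphi(x \leadsto y))[\sigma] = \val_A(\varphi)[\sigma + (x/\sigma(y))]$. On the other hand, the hypothesis unfolds via clause (5) to $\val_A(\varphi)[\sigma + (x/a)] = 1$ for all $a \in A$; instantiating at $a = \sigma(y) \in \ran(\sigma) \subseteq A$ produces precisely $\val_A(\varphi)[\sigma + (x/\sigma(y))] = 1$, which is what we need.

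The main obstacle, and the one subtlety worth flagging, is securing the hypothesis $y \in \dom(\sigma)$ that makes \autoref{sustitution-lemma} applicable and $\sigma(y)$ meaningful. If $x$ occurs free in $\varphi$, then because $y$ is free for $x$ the substituted occurrences of $y$ are genuinely free in $\varphi(x \leadsto y)$, so $y \in V(\varphi(x \leadsto y)) \subseteq \dom(\sigma)$ automatically. If $x$ does not occur free in $\varphi$, then $\varphi(x \leadsto y)$ is just $\varphi$ and there is nothing to substitute; one argues directly as in (1), using \autoref{value-only} together with the non-emptiness of $A$ to pass from $\val_A(\forall x(\varphi))[\sigma] = 1$ to $\val_A(\varphi)[\sigma] = 1$. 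I expect the bookkeeping about which variables lie in which domains, rather than any conceptual difficulty, to be the fiddliest part of the write-up.
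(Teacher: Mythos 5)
Your proposal is correct and takes essentially the same route as the paper's proof: item (1) via \autoref{value-only}, item (2) by directly unwinding the $\forall$-clause and Modus Ponens-style reasoning, and item (3) via \autoref{sustitution-lemma}, with the same key subtlety of securing $y \in \dom(\sigma)$ handled by a case analysis. The only organizational difference is that in (3) you split on whether $x$ is free in $\varphi$ while the paper splits on whether $y$ is free in the full implication---contrapositive formulations of the same dichotomy; your direct version is in fact slightly tidier, since it avoids the paper's appeal to proof by contradiction and spells out the final step (passing from $\val_A(\forall x(\varphi))[\sigma]=1$ to $\val_A(\varphi)[\sigma]=1$ using non-emptiness of $A$) that the paper leaves implicit.
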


\begin{proof}
    Suppose that $\varphi$ and $\psi$ are formulas. Fix a non-empty set $A.$ Then,  
    \begin{enumerate}
        \item Define $\chi$ as $\varphi \Rightarrow \forall x (\varphi)$. Assume that $x$ is not free in $\varphi,$ $\sigma$ is an assignment for $\chi$ in $A$, and that $A \models \varphi[\sigma].$ Fix $a \in A.$ Note that, since $x$ is not free in $\varphi,$ $\sigma \restrictto {V(\varphi)} = \tau \restrictto {V(\varphi)},$ where $\tau  \coloneqq \sigma + (x / a).$  Therefore, by \autoref{value-only}, we have that $$1 = \val_{A}(\varphi)[\sigma] = \val_{A}(\varphi)[\tau] = \val_{A}(\varphi)[\sigma + (x / a)].$$ Thus, for every $a \in A$ we have that $\val_{A}(\varphi)[\sigma + (x / a)] = 1;$ that is, $$\val_{A}(\forall x (\varphi))[\sigma] = 1.$$ Finally, $A \models (\varphi \Rightarrow \forall x \varphi)[\sigma].$
        
        \item Define $\chi$ as  $\forall x (\varphi \Rightarrow \psi) \Rightarrow (\forall x (\varphi) \Rightarrow \forall x (\psi)).$ Assume that $\sigma$ is an assignment for $\chi$ in $A$ and  that $A \models  (\forall x (\varphi \Rightarrow \psi))[\sigma]$   and $A \models (\forall x (\varphi))[\sigma]. $ Fix $a \in A.$  By definition, $A \models (\varphi \Rightarrow \psi)[\sigma + (x/a)]$ and $A \models \varphi[\sigma + (x / a)].$ Therefore, $A \models \psi[\sigma + (x / a)].$ Then, for every $a \in A$, we have that $\val_{A}(\psi)[\sigma + (x / a)] = 1.$  Finally, $A \models (\forall x)(\psi)[\sigma],$ and hence $A \models  \forall x (\varphi \Rightarrow \psi) \Rightarrow (\forall x (\varphi) \Rightarrow \forall x (\psi))[\sigma]. $
        
        \item Define $\chi$ as $\forall x (\varphi) \Rightarrow \varphi(x \, {\leadsto} \,  y).$ Assume that $y$ is free for $x$ in $\varphi,$ $\sigma$ is an assignment for $\chi$ in $A$, and that $A \models (\forall x(\varphi))[\sigma].$ There are two cases. First, suppose that $y$ is free in $\chi.$ Therefore, $y \in \dom(\sigma)$ and so $\sigma(y) \in A.$ Hence, in particular, $A \models \varphi[\sigma + (x / (\sigma(y)))]$ and note that  we are under the hypotheses of \autoref{sustitution-lemma}, by virtue of which, we have that $$\val_{A}(\varphi(x \, {\leadsto} \,  y))[\sigma] = \val_{A}(\varphi)[\sigma + (x / \sigma(y))] = 1.$$ Thus, $A \models \varphi(x \, {\leadsto} \,  y)[\sigma].$ 
        
        Now, if $ y $ is not free in $ \chi, $ then it is not free in $ \forall x (\varphi) $ and in $ \varphi (x \, {\leadsto} \,  y). $ Towards a contradiction, suppose that $ x $ appears free in $ \varphi. $ Since $ y $ is free for $ x $ in $ \varphi, $ then no free occurrence of $ x $ is under the scope of a $ \forall y $ quantifier, and therefore there must be a free occurrence of $ y $ in $ \varphi (x \, {\leadsto} \,  y), $ which contradicts that $ y $ is not free in $ \varphi (x \, {\leadsto} \,  y). $ It follows that $ x $ is not free in $ \varphi$, and therefore $ \varphi (x \, {\leadsto} \,  y) $ is $ \varphi, $ from which the result follows.
    \end{enumerate}
\end{proof}

\subsection{Proof theory and soundness}

This section introduces the last concepts necessary to build our logical system: logical axioms, inference rules, and formal proofs. We finish by proving the soundness theorem, a pivotal result that ties together all of these concepts.  We begin by introducing the first-order logical axioms, which seek to formalize elementary intuitions about the structure of validity and equality:

\begin{definition}
     If $\varphi$ and $\psi$ are arbitrary formulas, then a \highlight{logical axiom} is a universal closure of one the types of formulas below:
    \begin{enumerate}
     
        \item Propositional tautologies.
        
        \item $\varphi \Rightarrow \forall x (\varphi),$ where $x$ is not free in $\varphi.$
        
        \item $\forall x (\varphi \Rightarrow \psi) \Rightarrow (\forall x (\varphi) \Rightarrow \forall x (\psi))$
        
        \item $\forall x (\varphi) \Rightarrow \varphi(x \, {\leadsto} \,  y),$ where $y$ is free for $x$ in $\varphi.$ 

        \item $x = x.$
        
        \item $x = y \Leftrightarrow  y = x.$
        
        \item ($x = y \wedge y = z)  \Rightarrow x = z.$
        
        \item $(w = x \wedge y = z) \Rightarrow (w \in y \Leftrightarrow  x \in z).$
    \end{enumerate}
\end{definition}

For example, axioms of the type $(5), (6)$, and $(7)$ state an intuitively clear fact about equality: it is an equivalence relation. There are two main reasons for choosing these axioms: on the one hand, they are sufficient to prove the \emph{completeness theorem} (see \cite[II.12]{Kunen}), and on the other, the axioms are logically valid, which is necessary if we want to prove the \emph{soundness theorem}. That these axioms are intended to formalize elementary intuitions can be interpreted as saying that they are logically valid. This easily follows from results we have previously proved:

\begin{theorem}\label{axioms logic valid}
    All the logical axioms are logically valid.
\end{theorem}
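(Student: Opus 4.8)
The plan is to handle the eight types of base formulas first, and then deal separately with the passage to the universal closure, since a logical axiom is by definition a universal closure of one of those eight types. The first four types require essentially no new work: type (1) is logically valid by \autoref{tautologies valid}, while types (2), (3), and (4) are precisely the three formulas already shown to be logically valid in \autoref{axioms-valid-cor}. So for these it only remains to cite the earlier results.

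For types (5)--(8) I would argue directly from \autoref{truth definition}, appealing to the metatheoretic properties of equality and membership that we granted ourselves in \autoref{sec-2}. Fix a non-empty set $A$ and an assignment $\sigma$. For type (5), $\val_A(x=x)[\sigma]=1$ reduces to $\sigma(x)=\sigma(x)$, which holds by reflexivity. For (6) and (7), unwinding the abbreviations for $\Leftrightarrow$ and $\wedge$ via \autoref{truth definition} reduces the claim to the symmetry and transitivity of $=$ applied to $\sigma(x),\sigma(y),\sigma(z)$. For (8), the hypotheses $\sigma(w)=\sigma(x)$ and $\sigma(y)=\sigma(z)$ let us substitute equals into the membership statement, yielding $\sigma(w)\in\sigma(y)$ iff $\sigma(x)\in\sigma(z)$, which is exactly the required biconditional. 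In each case the computation is routine once the propositional connectives are expanded.

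The one genuinely new ingredient is the universal closure. I would therefore first establish, as a separate claim, that logical validity is preserved under universal closure: if $\varphi$ is logically valid, then so is $\forall x(\varphi)$, and hence so is any $\forall x_1\cdots\forall x_n\varphi$ by induction on $n$. For the single-quantifier step, fix a non-empty $A$ and an assignment $\sigma$ for $\forall x(\varphi)$ in $A$. By the clause for $\forall$ in \autoref{truth definition}, it suffices to show $\val_A(\varphi)[\sigma+(x/a)]=1$ for every $a\in A$; but $\sigma+(x/a)$ is itself an assignment for $\varphi$ in $A$, so this is immediate from the logical validity of $\varphi$.

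I expect the main obstacle to be bookkeeping rather than conceptual. One must verify that $\sigma+(x/a)$ genuinely satisfies \autoref{def-asig} for $\varphi$, i.e. that $V(\varphi)\subseteq\dom(\sigma+(x/a))$ and $\ran(\sigma+(x/a))\subseteq A$, keeping in mind that the $+$ operation of \autoref{notation rem} is stated relative to the free variables of the formula under evaluation, and that $V(\forall x(\varphi))=V(\varphi)\setminus\{x\}$. Once this is checked, the theorem assembles at once: each of the eight base types is logically valid, and the universal-closure claim lifts this to every logical axiom.
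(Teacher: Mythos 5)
Your proposal is correct, and for the eight base types it follows exactly the paper's route: type (1) via \autoref{tautologies valid}, types (2)--(4) by citing \autoref{axioms-valid-cor}, and types (5)--(8) by direct computation from \autoref{truth definition} using the metatheoretic properties of equality and membership. Where you genuinely differ is in the treatment of the universal closure: the paper's own proof stops at the base formulas and never addresses the fact that a logical axiom is, by definition, a universal \emph{closure} of one of those formulas. Your separate claim---that logical validity is preserved by prefixing $\forall x$, proved by checking that $\sigma + (x/a)$ is again an assignment for $\varphi$ in $A$ (so that the validity of $\varphi$ applies to it) and then inducting on the number of quantifiers---is precisely the missing step, and the bookkeeping you flag ($V(\forall x (\varphi)) = V(\varphi) \setminus \{x\}$, plus the domain and range conditions of \autoref{def-asig} for $\sigma + (x/a)$) is the right thing to verify and does check out. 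So your argument is strictly more complete than the paper's: the paper leaves the lifting to closures implicit, presumably regarding it as obvious, while you isolate it as a lemma and prove it. What the paper's terseness buys is brevity; what your version buys is an actual proof of the stated theorem rather than of its quantifier-free core, which matters here since the whole point of the construction is that every step be finitistically and explicitly verifiable.
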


\begin{proof}
    Notice that, by \autoref{truth definition} is clear that the axioms of type $(5)-(8)$ Furthermore, by \autoref{tautologies valid}, the axioms of type $(1)$ are logically valid. Finally, to see that the axioms of type $(2), (3)$ and $(4)$ are logically valid refer to the proof of \autoref{axioms-valid-cor}. 
\end{proof}

Another of the notions necessary to be able to complete the construction of our logical system is the notion of \highlight{inferences rules}. An \highlight{inference rule} is a reasoning that takes some \highlight{premises} and after analyzing its syntax yields as a result a \highlight{conclusion}. The only inference rule in our logical system is ``\highlight{Modus Ponens}'', which takes two premises: one of the form ``$\varphi \Rightarrow \psi$'' and the other one of the form ``$\varphi$'', and returns ``$\psi$''. Or, in a symbolic form: $$\frac{\varphi, \varphi \Rightarrow \psi}{\psi}$$ It is clear, by \autoref{truth definition}, that Modus Ponens is valid in the following sense: if it takes valid premises then its conclusion is also valid. Having introduced Modus Ponens as the only inference rule, we are able to introduce the definition of \highlight{formal proof}: 

\begin{definition}
     Suppose that $\Sigma$ is a set of sentences. A \highlight{formal proof} from $\Sigma$ is a finite, non-empty sequence of sentences $\varphi_{0}, \dots, \varphi_{n}$, such that for each $i$ either $\varphi_{i} \in \Sigma$ or $\varphi_{i}$ is a logical axiom, or for some $j, k < i, \ \varphi_{i}$ follows from $\varphi_{j}, \varphi_{k}$ by Modus Ponens. This sequence is a formal proof of its last sentence, $\varphi_{n}.$  
\end{definition}

\begin{definition}
     If $\Sigma$ is a set of sentences and $\varphi$ is a sentence, then $\Sigma \vdash \varphi$ if, and only if,  there exists a formal proof of $\varphi$ from $\Sigma.$ 
\end{definition}

Now, we generalize the idea of validity to sets of sentences:

\begin{definition}
    If $A$ is a non-empty set and $\Sigma$ is a set of sentences, then $A \models \Sigma$ if, and only if,  $A \models \varphi$ for each $\varphi \in \Sigma$. 
\end{definition}

This generalization allows us to naturally define a semantic relation similar to the formal proof relationship that can occur between a set of sentences $\Sigma$ and a sentence $\varphi$. 

\begin{definition}
     If $\Sigma$ is a set of sentences and $\varphi$ is a sentence, then $\Sigma \models \varphi$ if, and only if,  $A \models \varphi$ for every non-empty set $A$ such that $A \models \Sigma.$  
\end{definition}

So, given a set of sentences $\Sigma$ and a sentence $\varphi,$ we have defined two relations that can occur between them: a semantic one, namely $\Sigma \models \varphi$, and a syntactic one, namely $\Sigma \vdash \varphi.$ It is then natural to wonder about the relation that exists between the semantic and the syntactic notion. The first relation that we will be able to prove is known as the \highlight{soundness theorem} which states that $\Sigma \vdash \varphi \text{ implies } \Sigma \models \varphi$ or, in other words, that our logical system does not prove statements to be false:

\begin{theorem}\label{soundness}
    If $\Sigma \vdash  \varphi,$ then $\Sigma \models \varphi.$ 
\end{theorem}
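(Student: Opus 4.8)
The plan is to unwind the definitions and then argue by induction on the length of the formal proof witnessing $\Sigma \vdash \varphi$. Concretely, suppose $\Sigma \vdash \varphi$, so that we are handed a formal proof $\varphi_0, \dots, \varphi_n$ from $\Sigma$ with $\varphi_n = \varphi$. To establish $\Sigma \models \varphi$ we must produce, following the recipe interpretation of universal quantification from \autoref{sec-2}, an algorithm that takes an arbitrary non-empty set $A$ together with a proof that $A \models \Sigma$ and returns a proof that $A \models \varphi$. So I would fix such an $A$ and aim to show $A \models \varphi_i$ for every $i \le n$; the instance $i = n$ then delivers the desired conclusion.

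The core of the argument is a finite induction on the index $i \in \{0, \dots, n\}$. Since the bound $n$ is fixed once the proof is presented, this is an induction over a bounded initial segment of the natural numbers and hence is unproblematic in our metatheory. By the definition of a formal proof, for each $i$ exactly one of three situations occurs, and I would dispatch them in turn. If $\varphi_i \in \Sigma$, then $A \models \varphi_i$ is immediate from the hypothesis $A \models \Sigma$. If $\varphi_i$ is a logical axiom, then $A \models \varphi_i$ follows from \autoref{axioms logic valid}, which asserts that every logical axiom is logically valid, hence in particular valid in $A$ under every assignment; since $\varphi_i$ is a sentence, the choice of assignment is irrelevant by \autoref{value-only}, so the notation $A \models \varphi_i$ is unambiguous.

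The remaining, and genuinely load-bearing, case is when $\varphi_i$ is obtained by Modus Ponens from earlier formulas $\varphi_j, \varphi_k$ with $j, k < i$, say $\varphi_k$ being $\varphi_j \Rightarrow \varphi_i$. Here the induction hypothesis supplies $A \models \varphi_j$ and $A \models \varphi_j \Rightarrow \varphi_i$, and I would invoke the validity of Modus Ponens recorded just after its definition. Spelled out through \autoref{truth definition}: since $\varphi_j \Rightarrow \varphi_i$ abbreviates $(\neg(\varphi_j)) \vee (\varphi_i)$, validity of the implication forces $\val_A(\neg(\varphi_j))[\sigma] = 1$ or $\val_A(\varphi_i)[\sigma] = 1$; the first disjunct is incompatible with $A \models \varphi_j$, so a case split on this decidable disjunction leaves only $\val_A(\varphi_i)[\sigma] = 1$, that is, $A \models \varphi_i$. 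I expect this Modus Ponens step to be the main (if modest) obstacle, precisely because it is the only place where two induction hypotheses must be combined and where we reason about the abbreviated connective $\Rightarrow$; the other two cases reduce directly to previously established validity facts. Completing the induction yields $A \models \varphi_n = \varphi$, and since $A$ was an arbitrary non-empty set satisfying $\Sigma$, we conclude $\Sigma \models \varphi$.
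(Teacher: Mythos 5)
Your proof is correct and follows essentially the same route as the paper's: fix a non-empty $A$ with $A \models \Sigma$, induct on the index $i$ of the formal proof $\varphi_0, \dots, \varphi_n$, and handle the three cases (member of $\Sigma$, logical axiom via \autoref{axioms logic valid}, and Modus Ponens via \autoref{truth definition}). Your version is slightly more explicit in unwinding the $\Rightarrow$ abbreviation at the Modus Ponens step, but the argument is the same.
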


\begin{proof}
    Suppose that $\Sigma \vdash \varphi$ and fix a non-empty set $A$ such that $A \models \Sigma.$ Let $\varphi_{0}, \varphi_{1}, \dots, \varphi_{n}$ be a formal proof of $\varphi$ from $\Sigma.$ To show that $A \models \varphi,$ we will apply induction on $i \in \mathbb{N}$ to see that $A \models \varphi_{i}.$ Notice that if $\varphi_{i} \in \Sigma$ the result is clear. Also, if $\varphi_{i}$ is a logical axiom, the result follows from  \autoref{axioms logic valid}. Now, if Modus Ponens is used, there exists $j, k < i$ such that $\varphi_{k}$ is $(\varphi_{j} \Rightarrow \varphi_{i})$ and $\varphi_{i}$ is the result of applying Modus Ponens to $\varphi_{k}$ and $\varphi_{j}.$ Since $j, k < i,$ by hypothesis $A \models \varphi_{k}$ and $A \models \varphi_{k};$ that is, $A \models (\varphi_{j} \Rightarrow \varphi_{i})$ and $A \models \varphi_{j}.$ Finally, it is clear from \autoref{truth definition} that $A \models \varphi_{i}.$
\end{proof}

The converse, which is much less obvious, is also true (in $\mathrm{ZFC}$), and is a result of Kurt Gödel known as the \highlight{completeness theorem}. We need not prove this theorem for our purposes, but its proof can be found in \cite[Sec~II.10]{Kunen}. 

\autoref{soundness} determines the end of the construction of our logical system and also, as we mentioned before, indicates that we cannot prove false statements. This gives us confidence in the formal proofs that we carry out in the system, and this result can be thought of as a direct consequence of the confidence we have in the type of reasoning accepted in the metatheory. Now, if we summarize briefly what has been constructed in this chapter, we can say that we have defined, using metatheoretical reasoning, a \highlight{sound logical system}, which is roughly composed of the following elements: an alphabet, formulas, syntax and semantics, an inference rule, logical axioms, and formal proofs. Now, all that remains is to introduce the formal theory of sets based on this system.

With this in mind, we note that all of the axioms of set theory can be written using our alphabet $\mathcal{A}$ (see \cite{Kunen}), and therefore we can define in our logical system a set of sentences that correspond to the usual axioms of \emph{set theory}. This set of sentences, together with the tools of our logical system, is called  formal set theory (abbreviated simply as $\thzfc$)—a theory in which the vast majority of known mathematics can be formalized.

\section{Attempting to Answer the Question}

The work before us laid bare the dependencies and structure of the metatheoretical reasoning necessary to construct a sound system of first-order logic. We now turn to this construction as our object of study, and attempt to answer the question we originally posed: \emph{which came first, set theory or logic?}

At first glance we can say, perhaps with some trepidation, that logic came first! It is clear that the set metatheory that we used in creating our logical system is distinct and substantially less powerful (if for no other reason than it being limited to finite sets) than $\thzfc$. In other words, we were able to construct a logical system capable of hosting $\thzfc$, by using something that is not quite $\thzfc$.

On the other hand, set metatheory \emph{looks} quite a bit more like $\thzfc$ than our metatheoretical logic looks like our formal logical system. Informally speaking, set metatheory is a fragment of $\thzfc$ whose universe of discourse is the hereditarily finite sets (see \cite[Def. I.14.13]{Kunen}). Dissimilarly, our metatheoretical logic is not a less powerful form of first-order logic, but a different form of reasoning altogether. With this characterization in mind, we can argue that by using our primary intuitions (intuitionistic logic) we were able to create a basic set theory. This line of reasoning leads us to answer our initial question by saying that (a fragment of) set theory came first!

Perhaps not surprisingly, there may be some who disagree with both of these answers. Our original question leaves a lot to interpretation; it does not, for example, specify whether the logic it asks about is first-order logic.  This leads to our third answer: that (metatheoretical) logic came first.

Having reached the very beginning of our construction, and consequently, of more possible answers, we may now breathe a false sigh of relief. There is however another way of viewing our metatheoretical logic that affects how we would answer the question. As noted earlier, within our metatheoretical logic lie interspersed some very basic set-theoretical notions. In its exposition, we talk about comparing quantities, finite algorithms, etc., and explicitly note that it is ``bootstrapped from our intuitive notions of [...] finite sets". It is not that these notions come \emph{before} metatheoretical logic, but rather more like they are intertwined and exist concurrently as part of our innate rational faculties. Per a similar argument as in the preceding paragraph, we could view these notions as being worthy of the name ``set theory''. This viewpoint would have us answer the question with ``neither'', or ``both'', or ``the metatheory''.

\section*{Funding}

This work was supported by the Austrian Science Fund (FWF) [P33895 to AFUZ].

\section*{Acknowledgements}

The authors would like to thank Professor Diego A. Mejía of Shizuoka University for proofreading this article and giving valuable suggestions. The authors would also like to thank Professor Carlos Mario Parra Londoño, of the Universidad Nacional de Colombia Sede Medellín, for introducing the authors to the bibliography used in this paper and for helpful discussions.

{\small
\bibliography{bibli}

\begin{thebibliography}{}

\bibitem[Kleene, 1971]{Kleene}
Kleene, S.~C. (1971).
\newblock {\em Introduction to Metamathematics}.
\newblock Wolters-Noordhoff Publishers and North Holland Publishing Company.

\bibitem[Kunen, 2012]{Kunen}
Kunen, K. (2012).
\newblock {\em The Foundation of Mathematics}.
\newblock College Publications, London.

\bibitem[Tarski, 1931]{Tarski}
Tarski, A. (1931).
\newblock Sur les ensembles définissables de nombres réels. i.
\newblock {\em Fundamenta Mathematicae}, 17:210--239.

\bibitem[Velleman and George, 2002]{phil}
Velleman, D.~J. and George, A. (2002).
\newblock {\em Philosophies of Mathematics}.
\newblock BlackWell Publishers.

\end{thebibliography}
\bibliographystyle{apalike}
}

\end{document}